\numberwithin{equation}{section}
\newtheorem{theo}{Theorem}
\newtheorem{lemma}[theo]{Lemma}
\newtheorem{cor}[theo]{Corollary}
\newtheorem{rem}[theo]{Remark}
\newtheorem{exa}[theo]{Example}
\newtheorem{imbed}[theo]{Lemma}
\newtheorem{esti}[theo]{Remark}
\numberwithin{theo}{section}
 \def\mV{\mathsf{V}}
 \def\mE{\mathsf{E}}
 \def\mv{\mathsf{v}}
 \def\me{\mathsf{e}}
 \def\mw{\mathsf{w}}
 \def\mf{\mathsf{f}}
\title{Gradient systems on networks}
\author{Delio Mugnolo}
\author{Ren\'e Pr\"opper}
\keywords{Quantum graphs, vector-valued diffusion, nonlinear boundary conditions}
\subjclass[2010]{35K51,  35R02, 47H20}
\thanks{}
\begin{document}

\begin{abstract}
We consider a class of linear differential operators acting on vector-valued function spaces with general coupled boundary conditions. Unlike in the more usual case of so-called \emph{quantum graphs}, the boundary conditions can be nonlinear. After introducing a suitable Lyapunov function we prove well-posedness and invariance results for the corresponding nonlinear diffusion problem.
\end{abstract}

\maketitle

\section{Introduction}

Following earlier intuitions, K.\ Ruedenberg and C.\ Scherr developed in 1953 a new technique with the aim of studying electronic properties of conjugated bond systems, and in particular of aromatic molecules in~\cite{RueSch53}. Their idea was to set up a Schr\"odinger equation acting on a quasi-1-dimensional domain that can be schematised as a network of atoms, the network's edges being the chemical bonds. Their results aroused broad interest in the community of quantum chemists and marked the birth of the so-called \emph{free-electron network model}.

Over two decades later, when most solid state physicists had already turned back to the original, computationally more feasible discrete tight binding approximations, the free-electron model began to be studied by analysts and theoretical physicists. Among the first results we mention well-posedness results for the heat equations on networks obtained by G.\ Lumer in~\cite{Lum80}.
Further interesting results followed soon: among others, F.\ Ali Mehmeti, J.\ von Below, P.\ Exner, S.\ Nicaise, Yu.\ V.\ Pokorny\u{\i} and J.-P.\ Roth extended Lumer's results considering more and more general node conditions, providing interesting descriptions of the spectrum, discussing nonlinear and/or higher dimensional problems and establishing an interplay with quantum physics and theoretical mechanics. We refer to~\cite{PokBor04,Kuc08} for a survey of these early investigations.

This topic has finally gone mainstream in the late 1990s, when T. Kottos and U. Smilansky have observed in~\cite{KotSmi97,KotSmi99} that models based on differential operators on metric graphs can play a fundamental role in the theory of quantum chaos. Ever since, network-based differential models for Schr\"odinger equations (and, by extension, also diffusion, Dirac and Pauli ones) have been commonly referred to as ``quantum graphs'' in the literature.

Another interesting development has begun with~\cite{KosSch99,Har00,Kuc04,Pan05}, where a variety of nonstandard boundary conditions for partial differential equations on graphs have been described. In this general sense, a quantum graph is just a particular way to look at a vector-valued diffusion equation with coupled boundary conditions, and the associated elliptic problems can be studied by means of classical Sturm--Liouville theory: observe that the above formalisms contained in the above papers are essentially just simple cases of the general framework introduced in~\cite{SchSch65,SchSch66}. Though, such representations of coupled boundary conditions have paved the road for the development of abstract functional analytical methods for the treatment of general diffusion problems. It has been observed by several authors that quantum graphs represent a handy source of examples for unusual or even pathological behaviours of a diffusion equation. 

\bigskip
Nonlinear Robin-type boundary conditions for  parabolic equations on domains are relatively common in the literature. They model an outgoing flow that depends nonlinearly on the temperature or the density at the domain's boundary. However, they have been seldom considered in the framework of quantum graphs. To the best of our knowledge, in the specific case of networks they have been treated only in~\cite{Bel91,Bel93}. The more general case of differential inclusions on ramified spaces is treated in~\cite{AliNic93}, while nonautonomous semilinear parabolic systems have been treated by several autors following the seminal article~\cite{Ama88}. 

The aim of this note is to introduce a general theory of 1-dimensional parabolic systems featuring nonlinear boundary conditions. Thereto we apply the theory of gradient systems and a formulation of (coupled) boundary conditions that is strongly inspired by the aforementioned works by P.\ Kuchment. In Section~2 we are going to prove well-posedness for a large class of quantum graph-like systems with nonlinear boundary conditions in variational form. In Section~3 we are going to discuss some of their qualitative properties and $L^p$-well-posedness. We conclude the paper by briefly discussing the issue of diffusion equations with nonlinear dynamic boundary conditions in Section~4.

\bigskip
Finally, we emphasise that while the theory of quantum graphs has been the source of inspiration for the present investigation, it is misleading to connect our results to those in that field. In fact, while in the linear case well-posedness and qualitative properties for diffusion and Schr\"odinger equations on networks (as well as on domains) often come in pairs, in the nonlinear case most such connections fail to hold. The reason is that, of course, spectral methods lose their strength and have to be replaced by a hard analysis approach which is specific to the considered class of differential equations. In fact, our approach is based on the theory of gradient systems, which is currently pretty much bound to real Banach spaces. This seems to prevent any investigation of Schr\"odinger equations, even of damped ones. This is why we prefer to refer to previous investigations on diffusion equations with nonlinear coupled boundary conditions, although Schr\"odinger equations with concentrated nonlinearities that may look formally similar to ours have also been treated in the literature, see e.g.~\cite{AdaTet01}.

\section{General setting and well-posedness results}\label{sec2}

Let $H$ be a separable, real Hilbert space and $Y$ be a closed subspace of $H\times H$. Let $T>0$. Throughout this section we discuss the vector-valued diffusion equation
\begin{equation*}\tag{AV}
\left\{\begin{array}{rcll}
\frac{\partial}{\partial t}{u}(t,x)&=& \frac{\partial^2}{\partial x^2} u(t,x)+\psi(t,x) ,\qquad &t\in [0,T],\; x\in (0,1),\\
u(t)_{|\{0,1\}}&\in& Y, &t\in [0,T],\\
\frac{\partial u(t)}{\partial\nu}+\phi\left((u(t)_{|\{0,1\}}\right)&\in & Y^\perp, &t\in [0,T],\\
u(0)&=&u_0,\\
\end{array}
\right.
\end{equation*}
for an unknown $u:[0,T] \times (0,1)\to H$ with inhomogenous term  $\psi:[0,T] \times (0,1) \to H$ and a possibly nonlinear function $\phi:Y \to Y$, occurring in the  boundary condition. Here we denote
$$f_{|\{0,1\}}:=\begin{pmatrix}
f(0)\\ f(1)
\end{pmatrix}
\qquad\hbox{and}\qquad
\frac{\partial f(t)}{\partial\nu}:=\begin{pmatrix}
-f'(0)\\ f'(1)
\end{pmatrix}$$
for any $f:[0,1]\to H$ smooth enough. If $\phi$ is a linear mapping and $H$ is finite dimensional, it has been shown in~\cite{Kuc04} that the boundary conditions appearing in $\rm(AV)$ are the most general ones leading to self-adjoint diffusion operators.

\begin{exa}\label{zhazhe}
One of easiest nontrivial parabolic systems that can be written in the form $\rm(AV)$ is possibly that considered in~\cite{ZhaZhe03}. Restricting for the sake of simplicity to the 1-dimensional case, their boundary conditions for the unknowns $u_1,u_2$ read
$$
u_1'(i)=(-1)^{i+1} \lambda_1 e^{p_1 u_1(i)+q_1 u_2(i)},\qquad u_2'(i)=(-1)^{i+1} \lambda_2 e^{p_2 u_1(i)+q_2 u_2(i)},\qquad i=0,1,$$
for some positive parameters $\lambda_i,p_i,q_i$, $i=1,2$. Clearly, it can be rewritten as
$$u(t)_{|\{0,1\}}\in Y,\qquad \frac{\partial u(t)}{\partial\nu}+\phi\left((u(t)_{|\{0,1\}}\right)\in  Y^\perp$$
letting $H:=\mathbb R^2$, $Y:=\{0\}$ and $\phi:=(\phi_1,\phi_2)$, where
$$\phi_1\begin{pmatrix}
z_1\\ z_2
\end{pmatrix}:=
\phi_2\begin{pmatrix}
z_1\\ z_2
\end{pmatrix}:=
\begin{pmatrix}
\lambda_1 e^{p_1 z_1+q_1 z_2}\\ \lambda_2 e^{p_2 z_1+q_2 z_2}
\end{pmatrix}.$$
A thorough blow-up analysis of this system has been performed in~\cite{ZhaZhe03} and in many subsequent papers.
\end{exa}     
     
\begin{exa}\label{quantumgr}
Let $Y={\rm Range}\,\tilde{ I}$, where $I$ is the $n\times m$ (signed) incidence matrix of a finite, simple, directed graph with node set ${\mV}=\{\mv_1,\ldots,\mv_n\}$ and edge set $\mE=\{\me_1,\ldots,\me_m\}$, ${ I}^+,{ I}^-$ are the matrices whose entries are the positive and negative parts of the entries of $ I$ and
\begin{equation}\label{deftilde}
\tilde{ I}:=\begin{pmatrix} ({ I}^+)^T \\({ I}^-)^T\end{pmatrix}.
\end{equation}
If $\phi\equiv 0$, then the boundary conditions of $\rm(AV)$ agree with conditions of continuity/Kirchhoff-type, cf.~\cite{KraMugSik07}. More generally, if $\phi=\tilde{I}D^{-1}B(\tilde{I}D^{-1})^T$, they agree with so-called \emph{$\delta$-coupling conditions}
$$
\begin{array}{rcll}
u_\me(t,\mv)&=&u_\mf (t,\mv)=:u_\mv(t), &t\in [0,T],\; \me\sim \mv\sim\mf,\; \mv\in\mV,\\
\sum_{\me\sim\mv} \frac{\partial u(t)}{\partial\nu}(t,\mv)&=&\sum_{\mw\in \mV} b_{ \mv\mw} u_\mw(t),\qquad &t\in [0,T],\;\mv\in\mV,\\
\end{array}
$$
associated with an $n\times n$-matrix $B=(b_{\mv\mw})$, cf.~\cite{Mug07}. Here we write $\me\sim \mv$ if the edge $\me$ is incident in the node $\mv$, $D$ is the diagonal matrix of vertice degrees and $B$ is a $n\times n$-matrix.
\end{exa}

\begin{exa}
In the case of $H={\mathbb R}^m$ and 
$$Y:=\langle 1\rangle\times H=\{(c,c,\ldots,c,a_1,a_2,\ldots,a_m)\in H\times H:a_1,a_2,\ldots,a_m,c\in \mathbb R\},$$ 
$\rm(AV)$ reduces to a diffusion problem on a metric star with $m$ edges with nonlinear $\delta$-coupling conditions in the origin and Neumann boundary conditions in the external nodes. 

For $m=2$, the arising boundary conditions agree with those considered in~\cite{AdaTet01}. There, $H^1$-well-posedness for a Schr\"odinger (instead of heat) equation and concentrated nonlinearity (attractive or weakly repulsive interactions) in the origin has been proved.

A problem similar to $\rm{(AV)}$ has been discussed also in~\cite{CocGar10}, as the authors search for the viscosity solution of a hyperbolic system that arises in the Lighthill--Whitham traffic model. They show that a semilinear heat equation with concave nonlinearity on a star has a unique solution, for all small initial data in $H^1$.
\end{exa}

We incorporate the first boundary condition of (AV) in the Banach spaces
$$H^1_Y:=\left\{f\in H^1(0,1;H): f_{|\{0,1\}}\in Y\right\}\quad \hbox{resp.}\quad C_Y:=\left\{f\in C([0,1];H): f_{|\{0,1\}}\in Y\right\}.$$
For the inner product and the norm in $H_Y^1$, inherited from $H^1(0,1;H)$, we write $\langle \cdot\,,\, \cdot \rangle_{H^1_Y}$ resp. $\|\cdot\|_{H^1_Y}$. For the norm in $C_Y$, inherited from $C([0,1];H)$, we write $\|\cdot\|_{C_Y}$.
 We also write
\begin{equation*} X_2:=L^2(0,1;H)\;\text{ and }\; \|\cdot \|_2 :=\|\cdot \|_{L^2(0,1;H)}. \end{equation*}

Our aim is to re-write (AV) as a nonlinear abstract Cauchy problem -- more precisely, as a gradient system. To begin with, we assume that $\phi=\nabla \Phi$ for a continuously differentiable function $ \Phi: Y \to \mathbb R$ and introduce the functionals
$${\mathcal E}_0(f):=\frac{1}{2}\int_0^1 \|f^\prime(x)\|_H^2 dx, \qquad f\in H^1_Y,$$
$${\mathcal E}_1(f):=\Phi(f_{|\{0,1\}}), \qquad f\in C_Y,$$ 
and
\begin{equation} \label{lyap} {\mathcal E}:={\mathcal E}_0+{\mathcal E}_1:H^1_Y\to {\mathbb R}. \end{equation}

\begin{imbed}  \label{imbed}
The Hilbert space $H^1_Y$ is densely and continuously embedded into $X_2$.
\end{imbed}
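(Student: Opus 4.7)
The plan is to split the statement into its two parts and handle each via standard Sobolev-space facts, using the closedness of $Y$ in $H\times H$ for the embedding and the inclusion of $H$-valued test functions in $H^1_Y$ for density.

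For the continuous embedding, I would first recall that $H^1(0,1;H)$ embeds continuously into $L^2(0,1;H) = X_2$, which follows by the very definition of the $H^1$-norm $\|f\|_{H^1}^2 = \|f\|_2^2 + \|f'\|_2^2$. Next I would argue that $H^1_Y$ is a closed subspace of $H^1(0,1;H)$: the trace map $\gamma:H^1(0,1;H) \to H\times H$, $\gamma(f) := f_{|\{0,1\}}$, is well-defined and continuous (this is the $H$-valued analogue of the usual one-dimensional Sobolev trace estimate, obtained for instance from the fundamental theorem of calculus and Cauchy--Schwarz), and $H^1_Y = \gamma^{-1}(Y)$. Since $Y$ is closed in $H\times H$ by assumption, $H^1_Y$ inherits the $H^1$-norm and the inclusion $H^1_Y \hookrightarrow X_2$ is continuous with constant bounded by that of $H^1(0,1;H) \hookrightarrow L^2(0,1;H)$.

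For density, I would observe that $C_c^\infty((0,1); H)$, the space of smooth $H$-valued functions with compact support in the open interval, is contained in $H^1_Y$: any such $f$ satisfies $f(0)=f(1)=0$, so $f_{|\{0,1\}} = 0 \in Y$ since $Y$ is a (linear) subspace of $H\times H$. It therefore suffices to show that $C_c^\infty((0,1); H)$ is dense in $X_2 = L^2(0,1;H)$. Here I would use that $H$ is separable: the algebraic tensor product $C_c^\infty((0,1)) \otimes H$ is dense in $L^2(0,1;H)$ (finite linear combinations $\sum_k \varphi_k \otimes h_k$ approximate any element by expanding along an orthonormal basis of $H$ and truncating), and each element of this tensor product already lies in $C_c^\infty((0,1); H) \subset H^1_Y$.

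I do not expect any serious obstacle: the statement is essentially bookkeeping. The only point that deserves explicit attention is the use of separability of $H$ to pass from scalar-valued test functions to $H$-valued ones, and the verification that the trace map into $H\times H$ is continuous so that $H^1_Y$ is indeed closed in $H^1(0,1;H)$ — both of which are standard.
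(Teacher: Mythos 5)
Your proof is correct and follows essentially the same route as the paper, which simply sandwiches $H^1_Y$ between $H^1_0(0,1;H)$ (dense in $X_2$, which is exactly what your $C_c^\infty((0,1);H)$-argument establishes) and $H^1(0,1;H)$ (continuously embedded into $X_2$). Your additional verification that $H^1_Y=\gamma^{-1}(Y)$ is closed via continuity of the trace map is a worthwhile supplement — it is what makes $H^1_Y$ a Hilbert space in the first place, a point the paper leaves implicit — but it changes nothing in the embedding argument itself.
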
 

\begin{proof}
The assertion follows from the inclusions $H_0^1(0,1;H) \subset H^1_Y \subset H^1(0,1;H)$, since $H_0^1(0,1;H)$ is densely and $H^1(0,1;H)$ is continuously embedded into $X_2$.
\end{proof}

\begin{esti}
We observe that
\begin{equation} \label{esti} 
\|f_{|\{0,1\}} \|_{H \times H} \leq \sqrt{2} \|f \|_{\infty} \leq C \|f \|_{H^1_Y}, \quad f\in H_Y^1, 
\end{equation}
for some $C>0$. The former inequality follows from  
$$\|f_{|\{0,1\}} \|^2_{H \times H} =\|f(0)\|^2_{H} + \|f(1)\|^2_H \leq 2 \|f \|^2_{\infty},$$
and the latter from the continuous embedding of $ H^1(0,1;H)$ into $C(0,1;H)$.
\end{esti}

Inequality \eqref{esti} implies that $\mathcal E: H_Y^1 \to \mathbb R$ defined in~\eqref{lyap} is a continuously differentiable function. We recall that the gradient $\nabla \mathcal{E}$ with respect to the Hilbert spaces $H^1_Y$ and $X_2$ is the (generally nonlinear) operator $\nabla{\mathcal E}:D(\nabla{\mathcal E})\to X_2$ given by 
\begin{align*}
D(\nabla \mathcal{E})&:=\{f \in H_Y^1: \exists \, \xi \in X_2 \text{ s.t. } \mathcal{E}^\prime(f)\theta =\langle \xi\, ,\, \theta \rangle_{X_2} \;\forall \theta \in H_Y^1\}, \\
 \nabla \mathcal{E}(f)&:=\xi,
\end{align*} 
where ${\mathcal E}':H^1_Y \to (H^1_Y)'$ denotes the derivative of $\mathcal E$.

Next we are going to show that the initial-boundary value problem (AV) corresponds to the gradient system below with respect to $H^1_Y$ and $X_2$ 
\begin{equation}\tag{GS}
\left\{
\begin{array}{rcl}
\dot{u}(t)+\nabla {\mathcal E}(u(t))&=&\Psi(t),\qquad t\in [0,T],  \\
 u(0)&=&u_0,   
\end{array}
\right.
\end{equation}
where $\Psi(t):=\psi(t,\cdot)$.
 
 To this end, we observe first that the derivative of ${\mathcal E}$ is given by 
$$ {\mathcal E}^\prime(f) \theta = \int_0^1 \langle f^\prime(x)\,,\,\theta^\prime(x)\rangle_H \, dx + \langle \phi(f_{|\{0,1\}})\,,\,\theta_{|\{0,1\}} \rangle_{H \times H}, \quad f,\theta \in H^1_Y.$$
Take $f \in D(\nabla {\mathcal E}) \subset H^1_Y$ and choose $\theta \in  H^1_0(0,1;H)$. Then, by definition of $D(\nabla {\mathcal E})$, there exists $\xi\in X_2$ such that
$${\mathcal E}^\prime(f) \theta = \int_0^1 \langle f^\prime(x)\,,\,\theta^\prime(x)\rangle_H \, dx  \stackrel{!}{=}\int_0^1 \langle \xi(x) \,,\, \theta(x) \rangle_H dx.$$
We infer $\xi=-f^{\prime \prime}$ and $f \in H^2(0,1;H)$. For arbitrary $\theta \in H^1_Y$ we obtain, applying integration by parts,
\begin{align*} {\mathcal E}^\prime(f) \theta &=- \int_0^1 \langle f^{\prime \prime}(x)\,,\,\theta(x)\rangle_H \,dx  + \langle \frac{\partial f}{\partial\nu} \,,\,\theta_{|\{0,1\}} \rangle_{H\times H} +\langle \phi(f_{|\{0,1\}})\,,\,\theta_{|\{0,1\}}\rangle_{H\times H}\\ & \stackrel{!}{=}\int_0^1 \langle \xi(x)\,,\,\theta(x)\rangle_H \,dx =-\int_0^1 \langle f^{\prime \prime}(x)\,,\,\theta(x)\rangle_H \,dx.\end{align*}
Since one can find for every $y \in Y$ a $\theta \in H^1_Y$  with $\theta_{|\{0,1\}} = y$, it follows that $\frac{\partial f}{\partial\nu}+\phi(f_{|\{0,1\}}) \in  Y^\perp$.\\
If, on the other hand, $u(t,\cdot) \in H^1_Y \cap H^2(0,1;H)$, $t \in [0,T]$ satisfies the boundary condition $\frac{\partial u(t)}{\partial\nu}+\phi\left(u(t)_{|\{0,1\}}\right) \in  Y^\perp$, the above calculation yields $ u(t) \in D(\nabla {\mathcal E})$.\\

 The next Lemma is of rather general nature and similar to~\cite[Lemma~2.1]{Mug08}. We recall some definitions and well-known facts: Let $V,X$ be  Hilbert spaces such that $V\hookrightarrow X$, i.e. $V$ is densely and continuously embedded into $X$. A function $\mathcal E: V \to \mathbb R$ is called \emph{coercive} if for every $c \in \mathbb R$ the sublevel set $\{ f \in V: {\mathcal E}(f) \leq c \}$ is bounded in $V$. It is called \emph{$X$-elliptic} if  $\mathcal{E}^\omega(\cdot):={\mathcal E}(\cdot) + \omega\|\cdot\|_X^2 $ is convex and coercive for some $\omega \ge 0$. Obviously $\mathcal E$ is coercive if ${\mathcal E}(\cdot)\ge \alpha\|\cdot\|_V^2- \beta$ for some $\alpha > 0$ and $\beta  \ge 0$. For quadratic forms the converse also holds true with $\beta=0$.   

\begin{lemma}\label{perturbd}
Let $V,X$ be as above. Let $ {\mathcal E}_0:V\to\mathbb R$ be a quadratic form and $ {\mathcal E}_1:V \to \mathbb R$ be convex and satisfy
\begin{equation}\label{growthcond}
{\mathcal E}_1(f)\ge -k \|f\|^2_{X_\epsilon} - \beta \qquad \hbox{for all }f\in V \hbox{ and some } \beta, k \geq 0,
\end{equation}
where $ X_\epsilon$ is some Banach space such that $ V\hookrightarrow  X_\epsilon\hookrightarrow  X$ and verifying the interpolation inequality
\begin{equation*}
\|f\|_{X_\epsilon}\leq M_\epsilon \|f\|_{V}^\epsilon \|f\|_{X}^{1-\epsilon},\qquad f\in V,
\end{equation*}
for some $\epsilon\in [0,1)$ and some $M_\epsilon>0$. Then ${\mathcal E}_0+{\mathcal E}_1$ is $X$-elliptic if $ {\mathcal E}_0$ is $X$-elliptic.

Condition~\eqref{growthcond} is in particular satisfied if additionally $\mathcal{E}_1$ is continuous.
\end{lemma}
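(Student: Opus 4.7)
The plan is to verify, for a sufficiently large $\omega \geq 0$, that $\mathcal{E}^\omega := \mathcal{E}_0 + \mathcal{E}_1 + \omega\|\cdot\|_X^2$ is both convex and coercive on $V$. Convexity is automatic: by hypothesis some $\omega_0 \geq 0$ makes $\mathcal{E}_0 + \omega_0\|\cdot\|_X^2$ convex, and adding the convex $\mathcal{E}_1$ together with the convex $(\omega - \omega_0)\|\cdot\|_X^2$ (for any $\omega \geq \omega_0$) preserves convexity. So the whole task is to arrange coercivity.

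Since $\mathcal{E}_0 + \omega_0\|\cdot\|_X^2$ is a coercive quadratic form, the remark immediately preceding the lemma (applied to quadratic forms, where the additive $\beta$ can be taken to be $0$) gives the pointwise estimate $\mathcal{E}_0(f) \geq \alpha\|f\|_V^2 - \omega_0\|f\|_X^2$ for some $\alpha > 0$. To absorb the negative contribution of $\mathcal{E}_1$, I would chain \eqref{growthcond} with the interpolation inequality to get $\mathcal{E}_1(f) \geq -kM_\epsilon^2\|f\|_V^{2\epsilon}\|f\|_X^{2(1-\epsilon)} - \beta$, and then apply Young's inequality with conjugate exponents $1/\epsilon$ and $1/(1-\epsilon)$ -- the case $\epsilon = 0$ being just the trivial embedding bound $\|f\|_{X_0}^2 \leq M_0^2\|f\|_X^2$: for any $\delta > 0$,
$$\|f\|_V^{2\epsilon}\|f\|_X^{2(1-\epsilon)} \leq \delta\|f\|_V^2 + C_\delta\|f\|_X^2,$$
with $C_\delta$ depending only on $\delta$ and $\epsilon$. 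Selecting $\delta$ so small that $kM_\epsilon^2\delta \leq \alpha/2$ and summing the two lower bounds yields
$$\mathcal{E}(f) + (\omega_0 + kM_\epsilon^2 C_\delta)\|f\|_X^2 \geq \tfrac{\alpha}{2}\|f\|_V^2 - \beta,$$
which is exactly the coercivity of $\mathcal{E}^\omega$ with $\omega := \omega_0 + kM_\epsilon^2 C_\delta$.

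For the supplementary assertion, assume in addition that $\mathcal{E}_1$ is continuous. A continuous convex function on a Banach space is subdifferentiable at every point -- a standard consequence of Hahn--Banach that I would invoke rather than re-prove -- so picking any $\xi \in \partial\mathcal{E}_1(0) \subset V'$ gives the affine minorant $\mathcal{E}_1(f) \geq \mathcal{E}_1(0) - \|\xi\|_{V'}\|f\|_V$. A single Young step $\|\xi\|_{V'}\|f\|_V \leq \delta\|f\|_V^2 + \|\xi\|_{V'}^2/(4\delta)$ then realises \eqref{growthcond} with $V$ playing the role of $X_\epsilon$ and $\delta$ playing the role of $k$, and the rest of the coercivity argument goes through unchanged.

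The main obstacle is the Young-step bookkeeping: the interpolation exponents $\epsilon$ and $1-\epsilon$ degenerate at the endpoints of $[0,1)$, and one must verify carefully that the parameter $\delta$ can be tuned so that the $\|f\|_V^2$-coefficient arising from $\mathcal{E}_1$ is strictly absorbed by the $\alpha\|f\|_V^2$ from $\mathcal{E}_0$, without the residual $\|f\|_X^2$-coefficient blowing up.
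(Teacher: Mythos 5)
Your proposal is correct and follows essentially the same route as the paper: absorb the negative part of $\mathcal{E}_1$ via the interpolation inequality plus Young's inequality into the coercivity bound $\mathcal{E}_0(f)+\omega_0\|f\|_X^2\ge\alpha\|f\|_V^2$ for the quadratic form, with convexity preserved trivially (the paper distributes the exponents slightly differently, writing $\|f\|_{X_\epsilon}^2\le K M_\epsilon\|f\|_V^{1+\epsilon}\|f\|_X^{1-\epsilon}$ and applying Young with $p=2/(1+\epsilon)$, but this is only bookkeeping). For the supplementary assertion you take the Hahn--Banach/subdifferential route, which is exactly the first of the two alternatives the paper itself names before giving its elementary continuity-at-$0$-plus-convexity argument; both yield the affine minorant whose small quadratic absorption makes the main argument go through.
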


\begin{proof}
First  we estimate $\mathcal{E}_1$ by
\begin{align*} {\mathcal E}_1(f) &\ge -k \|f\|^2_{X_\epsilon} - \beta \\ &\geq -kK\|f\|_V\|f\|_{X_\epsilon}-\beta \\ &\geq -kKM_\epsilon \|f\|_V^{1+\epsilon}\|f\|_X^{1-\epsilon} - \beta\\
 &\geq -kKM_\epsilon(\delta \|f\|_V^2 + C(\delta)\|f\|_X^2) - \beta \end{align*}
where $K$ is a constant resulting from the continuous embedding of $V$ into  $X_\epsilon$. The last estimation follows applying Young's inequality $$xy \leq \delta x^p + C(\delta)y^{p'},\qquad x,y\ge 0,$$ 
 with $p=2/(1+ \epsilon)$ and $1/p + 1/p'=1$.\\  
Assume now $\mathcal{E}_0$ to be $X$-elliptic. Let $f\in V$. Then for appropriate $\alpha > 0$ and $\omega \geq 0$ we have
\begin{equation*} \label{star} {\mathcal E}_0(f) + \omega \|f \|_X^2\geq \alpha \|f\|_V^2 .    \end{equation*}
Choosing $\delta$ small enough, we obtain
\begin{align*}(\mathcal{E}_0 +\mathcal{E}_1)(f) &\geq \alpha' \|f\|_V^2 - \omega' \|f \|_X^2  -\beta  \end{align*}
where $0 < \alpha' := \alpha - kKM_\epsilon \delta$ and $0\le \omega' := \omega + kKM_\epsilon C(\delta)$. Furthermore, $(\mathcal{E}_0 +\mathcal{E}_1)^{\omega'}$ is convex if $\mathcal{E}_0^\omega$ is convex.

The last assertion results from the Hahn-Banach theorem or directly as following. Let $\mathcal{E}_1(0)=- b$ and $\epsilon >0$. Then there exists a $\delta > 0$, such that $\mathcal{E}_1(f) \ge -b -\epsilon$ for all $f$ with $\|f \| \le \delta$. Now convexity of $\mathcal{E}_1$ yields for every $f$ with $\| f\| \geq  \delta$ 
$$\frac{\delta}{\|f\|}\mathcal{E}_1(f)+ (1-\frac{\delta}{\|f\|})\mathcal{E}_1(0) \geq \mathcal{E}_1(\frac{\delta}{\|f\|}f) \geq -b -\epsilon $$ and together with $\mathcal{E}_1(0)=-b$ we get $\mathcal{E}_1(f) \geq -\frac{\epsilon}{\delta}\|f\|-b$ whenever $\|f\| \geq \delta$. Hence, we can choose $\gamma=b+\epsilon$ and $\kappa=\frac{\epsilon}{\delta}$ to obtain
$$\mathcal{E}_1(f)\ge -k \|f\|_{X_\epsilon} - \gamma \qquad \hbox{for all }f\in V \hbox{ and some } \gamma, \kappa \geq 0,$$
which clearly implies~\eqref{growthcond}.
\end{proof}

Now we can state the main result of this section. 

\begin{theo}\label{wellp}
Let $\phi:Y\to Y$ be a function that satisfies $\phi=\nabla \Phi$ for some convex $\Phi\in C^1(Y,{\mathbb R})$ and that maps bounded sets into bounded sets (what it does automatically if $Y$ is finite dimensional). 
 Then for all initial data $u_0 \in H^1_Y$ and all $\Psi \in L^2(0,T;X_2)$ there exists a unique solution $u\in H^1(0,T;X_2)\cap L^\infty(0,T;H^1_Y)$ to $\rm{(AV)}$. If $\Psi\equiv 0$, then the solution also belongs to $L^2(0,T;H^2(0,1;H))$ for every $T \geq 0$ and even to $L^2(0,\infty;H^2(0,1;H))$ if in addition $\Phi$ and therewith $\mathcal{E}$ is bounded from below. 
\end{theo}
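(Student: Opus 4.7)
The plan is to recast $\rm(AV)$ as the abstract gradient system $\rm(GS)$ in $X_2$, as sketched in the paragraphs preceding the statement, and then invoke the classical Br\'ezis theory of gradient flows for convex, proper, lower semicontinuous functionals on a Hilbert space, applied to $\mathcal{E}$ extended by $+\infty$ outside $H^1_Y$. Once $\mathcal{E}$ is shown to be convex, proper, $X_2$-lsc and $X_2$-elliptic, Br\'ezis's theorem delivers a unique absolutely continuous $u:[0,T]\to X_2$ with $\dot u\in L^2(0,T;X_2)$, $u(t)\in D(\nabla\mathcal{E})$ for a.e.\ $t$, and $\dot u(t)+\nabla\mathcal{E}(u(t))=\Psi(t)$.

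Convexity is immediate: $\mathcal{E}_0$ is a nonnegative quadratic form on $H^1_Y$, and $\mathcal{E}_1=\Phi\circ\mathrm{tr}$ is convex as the composition of the linear trace with the convex $\Phi$. The form $\mathcal{E}_0$ is $X_2$-elliptic because $\mathcal{E}_0(f)+\tfrac12\|f\|_2^2=\tfrac12\|f\|_{H^1_Y}^2$. To handle the sum I intend to invoke Lemma~\ref{perturbd} with $V=H^1_Y$, $X=X_2$ and $X_\epsilon=C([0,1];H)$: the one-dimensional Gagliardo--Nirenberg inequality $\|f\|_\infty\le M\|f\|_{H^1}^{1/2}\|f\|_2^{1/2}$ supplies the interpolation with $\epsilon=1/2$, while Remark~\ref{esti} and the continuity of $\Phi$ imply that $\mathcal{E}_1$ is continuous on $H^1_Y$; the last clause of Lemma~\ref{perturbd} then yields the growth bound \eqref{growthcond} and hence $X_2$-ellipticity of $\mathcal{E}=\mathcal{E}_0+\mathcal{E}_1$. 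Lower semicontinuity in $X_2$ follows by standard arguments from $X_2$-ellipticity and the compact embedding $H^1_Y\hookrightarrow X_2$ provided by Lemma~\ref{imbed} in one space dimension.

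With these ingredients in place and $u_0\in H^1_Y=D(\mathcal{E})$, Br\'ezis's theorem produces a unique strong solution $u\in H^1(0,T;X_2)$; the $X_2$-ellipticity, together with the standard energy estimate applied to $\mathcal{E}(u(t))$, promotes it to $u\in L^\infty(0,T;H^1_Y)$. The characterisation of $D(\nabla\mathcal{E})$ obtained before Lemma~\ref{perturbd} — namely $f\in D(\nabla\mathcal{E})$ iff $f\in H^2(0,1;H)\cap H^1_Y$ with $\tfrac{\partial f}{\partial\nu}+\phi(f_{|\{0,1\}})\in Y^\perp$, and in that case $\nabla\mathcal{E}(f)=-f''$ — then shows that this abstract solution is a genuine solution of $\rm(AV)$. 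The hypothesis that $\phi$ sends bounded sets to bounded sets intervenes precisely here, to guarantee that the boundary term makes pointwise sense along the trajectory.

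For the extra $L^2(0,T;H^2(0,1;H))$-regularity when $\Psi\equiv 0$, the Br\'ezis energy identity $\int_0^T\|\dot u(t)\|_{X_2}^2\,dt=\mathcal{E}(u_0)-\mathcal{E}(u(T))$ combined with $\|\dot u(t)\|_{X_2}=\|\nabla\mathcal{E}(u(t))\|_{X_2}=\|u''(t)\|_{X_2}$ controls $\int_0^T\|u''(t)\|_{X_2}^2\,dt$, and together with the $L^\infty(0,T;H^1_Y)$-bound this gives the claimed $H^2$-in-space integrability; if $\mathcal{E}$ is bounded below the right-hand side of the energy identity is bounded uniformly in $T$, yielding the $L^2(0,\infty;H^2)$ statement. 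The step I expect to require most care is confirming that the subdifferential of the $X_2$-extended $\mathcal{E}$ coincides with the $\nabla\mathcal{E}$ of the paper and that the Br\'ezis solution really lies in $D(\nabla\mathcal{E})$ for a.e.\ $t$; this is where the Gagliardo--Nirenberg-type control of the trace via Remark~\ref{esti} and the boundedness assumption on $\phi$ are essential to rule out singular boundary contributions and make the variational and pointwise pictures match.
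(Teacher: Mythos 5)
Your route is genuinely different from the paper's: the paper does not invoke Br\'ezis's subdifferential theory at this point but instead applies its Theorem~\ref{gengrad} (a Lions-type result for gradient systems) with $V=H^1_Y$ and $X=X_2$, for which the two things to check are that $\mathcal E'$ maps bounded sets of $H^1_Y$ into bounded sets of $(H^1_Y)'$ (this is exactly where the hypothesis that $\phi$ maps bounded sets into bounded sets enters, via the trace estimate \eqref{esti}) and that $\mathcal E$ is $X_2$-elliptic (via Lemma~\ref{perturbd} with the same interpolation $\|f\|_{C_Y}^2\le 4\|f\|_{H^1_Y}\|f\|_2$ you use). The paper's route has the advantage of not requiring convexity of $\mathcal E$ as such, only $X_2$-ellipticity; your route exploits the convexity of $\Phi$ that is anyway assumed, and is consistent with what the paper itself does later in Section~3. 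Your treatment of the $X_2$-ellipticity, of the identification of $D(\nabla\mathcal E)$ with the boundary conditions of $\rm(AV)$, and of the extra $H^2$-regularity from the energy identity matches the paper's.

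There is, however, one concrete flaw: you justify the lower semicontinuity of the $+\infty$-extension of $\mathcal E$ on $X_2$ by appealing to a \emph{compact} embedding $H^1_Y\hookrightarrow X_2$. Lemma~\ref{imbed} only asserts a dense and continuous embedding, and compactness genuinely fails when $H$ is infinite dimensional (take $f_n\equiv e_n$ for an orthonormal sequence $(e_n)$ of $H$: this is bounded in $H^1(0,1;H)$ but has no subsequence converging in $L^2(0,1;H)$); the paper even flags in a footnote that compactness of $V\hookrightarrow X$ is not assumed. The conclusion you need is still true, but the correct argument is: if $f_n\to f$ in $X_2$ with $\liminf\mathcal E(f_n)<\infty$, then coercivity ($X_2$-ellipticity) bounds $(f_n)$ in the reflexive space $H^1_Y$, a subsequence converges weakly in $H^1_Y$ to $f$ by uniqueness of limits, and weak lower semicontinuity of the convex, $H^1_Y$-continuous $\mathcal E$ gives $\mathcal E(f)\le\liminf\mathcal E(f_n)$. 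Replace the compactness argument by this weak-compactness one and your proof goes through. A smaller imprecision: in the Br\'ezis framework the hypothesis that $\phi$ maps bounded sets into bounded sets is not really what makes ``the boundary term make pointwise sense''; it plays no role in the subdifferential machinery itself, whereas in the paper's approach it is an explicit hypothesis of Theorem~\ref{gengrad}.
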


These strong regularity properties of the solution are the main advantage of the approach based on gradient systems over the more general one based on maximal monotone operators as in~\cite{AliNic93,FavGolGol06}.
For the proof of Theorem \ref{wellp} we need the following general result based on~\cite[Thm.~2.1.2 bis, p.~163]{Lio69}, see also~\cite[Thm. 8.1]{ChiFas10}\footnote{ Observe that assuming the compactness of the embedding $V\hookrightarrow H$, as done in~\cite{ChiFas10}, is not necessary in our case, since the metric on $H^1_Y$ is constant.}.

\begin{theo}\label{gengrad}
Suppose that $V$ is a reflexive, separable real Banach space, which is densely and continuously embedded into the separable, real Hilbert space $X$. Suppose that $\mathcal E:V\to\mathbb R$ is a continously differentiable function such that the derivative $\mathcal E^\prime:V \to V^\prime$ maps bounded sets into bounded sets. Assume that  $\mathcal E$ is $X$-elliptic.
Then, for all $T  \in (0,\infty)$, all $\Psi \in L^2(0,T;X)$ and all initial data $u_0 \in V$ there exists a unique solution $u \in H^1(0,T;X) \cap L^\infty(0,T;V)$ of the gradient system~$\rm(GS)$. For this solution, the energy inequality
\begin{equation}\label{eneine}  
\int_0^t \| \dot{u}\|_H^2 + \mathcal{E}(u(t)) \leq \mathcal{E}(u_0) + \int_0^t \langle \Psi\,,\, \dot{u} \rangle_H ,\qquad t\in [0,T],
\end{equation}
holds.
\end{theo}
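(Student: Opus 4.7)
The plan is to adapt the classical Faedo--Galerkin method for nonlinear variational evolution equations. First I would fix a sequence $(w_k)$ whose linear span is dense in $V$ (using separability of $V$), set $V_n:=\mathrm{span}(w_1,\dots,w_n)$, and seek $u_n\in C^1([0,T_n];V_n)$ solving the projected Cauchy problem
\[
\langle\dot u_n(t),v\rangle_X+\mathcal{E}'(u_n(t))v=\langle\Psi(t),v\rangle_X,\qquad v\in V_n,
\]
with $u_n(0)$ an approximation of $u_0$ in $V$. Since $\mathcal{E}'$ is continuous and $V_n$ is finite-dimensional, this reduces to an ODE in $\mathbb{R}^n$ with continuous right-hand side, and Peano yields a local solution.

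A priori bounds come from testing with $\dot u_n(t)\in V_n$: the chain rule gives $\|\dot u_n\|_X^2+\tfrac{d}{dt}\mathcal{E}(u_n)=\langle\Psi,\dot u_n\rangle_X$, hence after Young's inequality and integration
\[
\tfrac12\int_0^t\|\dot u_n(s)\|_X^2\,ds+\mathcal{E}(u_n(t))\le\mathcal{E}(u_n(0))+\tfrac12\|\Psi\|_{L^2(0,T;X)}^2.
\]
$X$-ellipticity ensures that $\mathcal{E}^\omega:=\mathcal{E}+\omega\|\cdot\|_X^2$ is convex and coercive on $V$, hence bounded below; combining this with the elementary bound $\|u_n(t)\|_X^2\le 2\|u_n(0)\|_X^2+2t\int_0^t\|\dot u_n\|_X^2$ and a Gronwall-type absorption (first on a short interval, then iterated) gives $\|u_n\|_{L^\infty(0,T;V)}+\|\dot u_n\|_{L^2(0,T;X)}\le C$, which in particular extends $u_n$ to all of $[0,T]$. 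The hypothesis that $\mathcal{E}'$ maps bounded sets into bounded sets then yields $\|\mathcal{E}'(u_n)\|_{L^\infty(0,T;V')}\le C$ as well.

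Reflexivity of $V$ and Banach--Alaoglu let me extract a subsequence with $u_n\overset{\ast}{\rightharpoonup}u$ in $L^\infty(0,T;V)$, $\dot u_n\rightharpoonup\dot u$ in $L^2(0,T;X)$ and $\mathcal{E}'(u_n)\rightharpoonup\chi$ in $L^2(0,T;V')$; density of $\bigcup_n V_n$ in $V$ gives $\dot u+\chi=\Psi$, and $u(0)=u_0$. The main obstacle I expect is identifying $\chi=\mathcal{E}'(u)$: because the footnote precludes compactness of $V\hookrightarrow X$, strong convergence of $u_n$ is unavailable and one must rely on monotonicity. Convexity of $\mathcal{E}^\omega$ makes $(\mathcal{E}^\omega)':V\to V'$ monotone and hemicontinuous, so $\int_0^T\langle\mathcal{E}'(u_n)-\mathcal{E}'(v),u_n-v\rangle+2\omega\int_0^T\|u_n-v\|_X^2\ge 0$ for every $v\in L^2(0,T;V)$. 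Testing the Galerkin identity with $u_n$ itself rewrites $\int_0^T\mathcal{E}'(u_n)u_n=\tfrac12\|u_n(0)\|_X^2-\tfrac12\|u_n(T)\|_X^2+\int_0^T\langle\Psi,u_n\rangle_X$, and taking $\limsup$ via weak lower semicontinuity of $\|\cdot\|_X^2$ at $t=T$ produces $\int_0^T\langle\chi-\mathcal{E}'(v),u-v\rangle+2\omega\int_0^T\|u-v\|_X^2\ge 0$; the standard radial choice $v=u-\lambda\theta$ with $\lambda\downarrow 0$ then forces $\chi=\mathcal{E}'(u)$. Uniqueness follows immediately: for two solutions $u,\tilde u$ the difference $w:=u-\tilde u$ satisfies $\dot w+\nabla\mathcal{E}(u)-\nabla\mathcal{E}(\tilde u)=0$ in $X$, and the $X$-inner product with $w$ combined with monotonicity of $\nabla\mathcal{E}^\omega$ yields $\tfrac{d}{dt}\|w\|_X^2\le 4\omega\|w\|_X^2$, so Gronwall and $w(0)=0$ close the argument. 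Finally, \eqref{eneine} follows by passing the Galerkin energy identity to the limit, exploiting weak lower semicontinuity of $\|\cdot\|_X$ in $L^2(0,T;X)$ and of $\mathcal{E}$ on $V$ (convex modulo a continuous perturbation is weakly lower semicontinuous).
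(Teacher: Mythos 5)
Your Faedo--Galerkin strategy is the expected one: the paper itself gives no proof of this theorem but quotes it from Lions and from Chill--Fa\v{s}angov\'a, and those sources proceed essentially as you do. The projected ODE, the a priori estimate from testing with $\dot u_n$, the absorption argument, the weak-$*$ extraction and the Gronwall uniqueness argument are all sound. The gap is in the identification $\chi=\mathcal{E}'(u)$. Your monotonicity inequality carries the term $2\omega\int_0^T\|u_n-v\|_X^2$, and to pass from $I_n\ge 0$ to the limit inequality you need $\limsup_n\int_0^T\|u_n-v\|_X^2\le\int_0^T\|u-v\|_X^2$; weak convergence gives only the reverse bound ($\liminf\ge$), and since the embedding $V\hookrightarrow X$ is deliberately \emph{not} assumed compact (see the paper's footnote to this theorem), you cannot upgrade $u_n\rightharpoonup u$ to strong convergence in $L^2(0,T;X)$ via Aubin--Lions. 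So for $\omega>0$ --- which the hypotheses allow, since only $\mathcal{E}^\omega$, not $\mathcal{E}$, is assumed convex --- the step ``taking $\limsup$ \dots produces \dots'' does not go through. The same defect resurfaces in your parenthetical claim that $\mathcal{E}$ is weakly lower semicontinuous on $V$ because it is ``convex modulo a continuous perturbation'': the perturbation $-\omega\|\cdot\|_X^2$ is weakly \emph{upper}, not lower, semicontinuous along weakly convergent sequences in $V$, and this affects both $\liminf_n\mathcal{E}(u_n(t))\ge\mathcal{E}(u(t))$ and your derivation of \eqref{eneine}.

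The standard repair is the exponential rescaling $u(t)=e^{\lambda t}w(t)$ with $\lambda\ge 2\omega$: the transformed operator $w\mapsto\lambda w+e^{-\lambda t}\mathcal{E}'\bigl(e^{\lambda t}w\bigr)$ is genuinely monotone, and the troublesome quantity $\lambda\int_0^T e^{-2\lambda t}\|u_n\|_X^2$ now arises \emph{inside} the pairing $\int_0^T\langle B_t(w_n),w_n\rangle$, where it cancels exactly against the term produced by integrating $e^{-2\lambda t}\tfrac12\tfrac{d}{dt}\|u_n\|_X^2$ by parts; the only surviving boundary contribution is $-\tfrac12 e^{-2\lambda T}\|u_n(T)\|_X^2$, which weak lower semicontinuity handles with the correct sign, so Minty's radial argument closes. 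The energy inequality is then best obtained for the limit solution directly, via the chain rule for the $\lambda$-convex functional $\mathcal{E}$ along curves in $H^1(0,T;X)\cap L^\infty(0,T;V)$, rather than by passing the Galerkin energy identity to the limit. As written, your argument is complete in the special case $\omega=0$, i.e.\ when $\mathcal{E}$ itself is convex --- which happens to cover the application in Theorem~\ref{wellp} --- but not in the generality stated.
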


\medskip
\noindent
\emph{Proof of Theorem \ref{wellp}.} 
The last assertion of Theorem~\ref{wellp} follows immediately from the energy inequality~\eqref{eneine}, because $\dot{u}{(t)}=u''(t)$ for all $t\ge 0$. It remains to prove that $\mathcal{E}^\prime$ maps bounded sets into bounded sets and that $\mathcal E$ is $X_2$-elliptic.

To begin with, let  $f \in H^1_Y$ with $\|f \|_{H^1_Y} \leq K$. By~\eqref{esti} we know that $\| f_{|\{0,1\}}\|_{H \times H} \leq CK$, where $C$ is the same constant  as in \eqref{esti}. Let $m$ be an upper bound for $\|\phi(\cdot)\|_{H \times H}$ on the bounded set $\{y \in Y: \|y\|_{H \times H}\leq CK\}$. It follows that
\begin{equation*} \|\mathcal{E}^\prime(f)\| \begin{aligned}[t] &= \sup_{\|\theta\|_{H^1_Y}=1}|\int_0^1 \langle f^\prime(x)\,,\,\theta^\prime(x)\rangle_H \, dx + \langle \phi(f_{|\{0,1\}})\,,\,\theta_{|\{0,1\}} \rangle_{H \times H}| \\
&\leq \|f\|_{H^1_Y} \| \theta \|_{H^1_Y} +\|\phi(f_{|\{0,1\}})\|_{H \times H}\|\theta_{|\{0,1\}}\|_{H \times H}  \\
& \leq K + mC.   \end{aligned} \end{equation*}


Finally, in order to show $X_2$-ellipticity we use Lemma~\ref{perturbd}. Since $\Phi$ is convex and continuous we have
$$ \mathcal{E}_1(f)=\Phi(f_{|\{0,1\}})\geq -k \|f_{|\{0,1\}}\|^2_{H \times H} -\beta \geq -kC^2\|f\|^2_{C_Y} -\beta, \quad f \in H_Y^1,$$ for some $k,\beta \in \mathbb{R}$. Moreover, 
$$\|f\|^2_{C_Y} \leq 4\|f\|_{H^1_Y}\|f\|_2,\quad f \in H_Y^1, \qquad \text{and} \qquad H_Y^1 \hookrightarrow C(0;1;H) \hookrightarrow X_2.$$ Thus, Lemma \ref{perturbd} yields the claim.
\qed
\begin{rem}
The system in Example~\ref{zhazhe} becomes a gradient system 
for the choice $p_1=p_2$, $q_1=q_2$ and $\lambda_2=\lambda_1 q_1/p_1$; furthermore, if $\lambda_1 \leq 0$, it fulfils the requirements of Theorem~\ref{wellp} and possesses a global solution. For $\lambda >0$, on the contrary, the system undergoes a blow-up in finite time as was shown in~\cite{ZhaZhe03}. 
\end{rem}
\begin{rem}
Observe that if $u\in H^1(0,T;X_2)\cap L^\infty(0,T;H^1_Y)$, then its $C([0,T];X_2)$-representative satisfies
\begin{equation}
\label{boundeq}
\|u(t)\|_{H_Y^1} \le K
\end{equation}
for some constant $K$ and \emph{all} (and not only a.e.) $t\in [0,T]$. In fact, consider a Lebesgue-null set $N$ on whose complement~\eqref{boundeq} holds. Let $t_0\in N$ and $(t_n)_{n\in\mathbb N}\subset (0,T)\setminus N$ converging to $t_0$. Then $\|u(t_n)\|_{H_Y^1} \le K$ for all $n\in\mathbb N$ and by reflexivity of $H^1_Y$ we deduce that (up to taking a subsequence) $(u(t_n))_{n\in\mathbb N}$ converges weakly to some $v$ in $H^1_Y$. It also converges strongly to $u(t_0)$ in $L^2(0,1;H)$, hence by uniqueness of the limit $u(t_0)=v\in H_Y^1$. 
\end{rem}

\section{Invariance properties}

In this section we discuss the issue of invariance
properties of solutions to a network gradient system. The first results
of this kind for solutions of parabolic problems associated with
quadratic forms go back to Beurling and Deny, cf.~
\cite{BeuDen58,BeuDen59}. A similar criterion in the nonlinear case
appear already in Brezis' monograph (cf.~\cite[Thm.~IV.4.5]{Bre73}), but
only recently were Barth\'elemy, Cipriani and Grillo able to prove a more efficient
invariance result exactly in the spirit of Beurling and Deny. Their main
result, which we report for the sake of self-containedness, is the
following (cf.~\cite[Thm.~1.1]{Bar96} and~\cite[Thm.~3.4]{CipGri03}).

\begin{theo}\label{cipgri}
Let $\mathfrak E$ be a lower semicontinuous, convex functional on a
Hilbert space $X$ with values in $(-\infty,\infty]$, and $(T(t))_{t\ge
0}$ be the corresponding strongly continuous, nonexpansive
semigroup on $X$, generated by the subdifferential $\partial \mathfrak E$. Then
$(T(t))_{t\ge 0}$ leaves invariant a closed and convex set $C \subset X$
if and only if 
$${\mathfrak E} (P_C(x)) \leq {\mathfrak E}(x)\quad\hbox{for
all }x \in X,$$ 
where $P_C$ denotes the orthogonal projection of $X$ onto $C$.
\end{theo}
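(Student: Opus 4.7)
The plan is to reduce the statement to an equivalent one about the resolvent family $J_\lambda:=(I+\lambda\partial\mathfrak{E})^{-1}$, $\lambda>0$. On the one hand, the Crandall--Liggett exponential formula $T(t)x=\lim_{n\to\infty}J_{t/n}^n x$ together with closedness of $C$ shows that $J_\lambda(C)\subset C$ for every $\lambda>0$ is sufficient for semigroup invariance; the converse is classical in the theory of nonlinear semigroups generated by maximal monotone operators. The theorem therefore reduces to the equivalence of $J_\lambda(C)\subset C$ for every $\lambda>0$ with $\mathfrak{E}(P_C x)\le\mathfrak{E}(x)$ for every $x\in X$, and the central tool for both directions will be the variational characterisation of $J_\lambda x$ as the unique minimizer of the strictly convex functional $z\mapsto \mathfrak{E}(z)+\frac{1}{2\lambda}\|z-x\|_X^2$.

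For the sufficiency direction I fix $\lambda>0$, $x\in C$ and set $y:=J_\lambda x$. Testing the minimality of $y$ with the trial element $z=P_C y\in C$ yields
\[
\mathfrak{E}(y)+\tfrac{1}{2\lambda}\|y-x\|^2\;\le\;\mathfrak{E}(P_C y)+\tfrac{1}{2\lambda}\|P_C y-x\|^2.
\]
The standing hypothesis applied at $y$ gives $\mathfrak{E}(P_C y)\le\mathfrak{E}(y)$, while the defining variational inequality $\langle y-P_C y,\,z-P_C y\rangle\le 0$, $z\in C$, evaluated at $z=x$ gives $\|P_C y-x\|^2\le\|y-x\|^2$. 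Feeding both estimates back into the displayed inequality forces equality throughout, and uniqueness of the minimizer $y$ then identifies $y$ with $P_C y$, so that $y\in C$.

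For the converse I expect the main difficulty, because the obvious substitution -- plugging $P_C x$ as trial element into the problem defining $J_\lambda x$ -- produces the penalty $\tfrac{1}{2\lambda}\|P_C x-x\|^2$ on the wrong side of the inequality. I would therefore work instead with the Moreau--Yosida regularisation $\mathfrak{E}_\lambda(x)=\mathfrak{E}(J_\lambda x)+\tfrac{1}{2\lambda}\|J_\lambda x-x\|^2$, which is continuously differentiable, convex, and increases monotonically to $\mathfrak{E}$ as $\lambda\downarrow 0$. The aim would be to prove $\mathfrak{E}_\lambda(P_C x)\le\mathfrak{E}_\lambda(x)$ at each fixed $\lambda>0$ and then pass to the monotone limit. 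The resolvent-invariance hypothesis enters at the regularised level through $J_\lambda(P_C x)\in C$, via a careful comparison of the two variational problems defining $J_\lambda x$ and $J_\lambda(P_C x)$, using monotonicity of $\partial\mathfrak{E}$ combined with the characterising inequality for $P_C$. Equivalently, one may transfer the invariance to the gradient flow of the augmented functional $\mathfrak{E}+I_C$ and read off the desired bound by a Mosco-type limit argument.
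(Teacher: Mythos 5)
First, a point of comparison: the paper does not prove Theorem~\ref{cipgri} at all --- it is imported from the literature (Barth\'elemy, Cipriani--Grillo) ``for the sake of self-containedness'', so there is no in-paper argument to measure you against and your attempt must stand on its own. Its architecture is the standard and correct one: reduce semigroup invariance to resolvent invariance $J_\lambda(C)\subset C$ via the exponential formula (and the classical converse), and exploit the characterisation of $J_\lambda x$ as the unique minimiser of $z\mapsto\mathfrak E(z)+\tfrac{1}{2\lambda}\|z-x\|_X^2$. Your sufficiency half is complete: testing the minimality of $y=J_\lambda x$ against $P_Cy$, invoking the hypothesis at $y$ and the projection inequality with trial point $x\in C$, forces $P_Cy$ to be a second minimiser of a strictly convex functional, whence $y=P_Cy\in C$.

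The necessity half is where your text turns from a proof into a plan (``I would therefore work\dots'', ``the aim would be\dots''), and the decisive computation is missing. It does, however, exist, and with exactly the ingredients you name: for $p:=P_Cx$ and $q:=J_\lambda p\in C$ one has $\tfrac{1}{\lambda}(p-q)\in\partial\mathfrak E(q)$, hence
\[
\mathfrak E(J_\lambda x)\;\ge\;\mathfrak E(q)+\tfrac{1}{\lambda}\langle p-q,\,J_\lambda x-q\rangle ;
\]
decomposing $J_\lambda x-q=(x-p)+(J_\lambda x-x)-(q-p)$, the term $\langle p-q,\,x-p\rangle$ is nonnegative by the projection inequality applied with the trial point $q\in C$, the term $\langle p-q,\,J_\lambda x-x\rangle$ is controlled by Young's inequality, and the last term contributes $\|p-q\|^2$; together these give precisely $\mathfrak E_\lambda(P_Cx)\le\mathfrak E_\lambda(x)$, and letting $\lambda\downarrow0$ with $\mathfrak E_\lambda\uparrow\mathfrak E$ pointwise (which uses lower semicontinuity and also disposes of the case $P_Cx\notin D(\mathfrak E)$) concludes. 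So nothing in your proposal would fail, but as written the second implication is a sketch rather than a proof; the alternative you float in the final sentence (gradient flow of $\mathfrak E+I_C$ with a Mosco limit) is far vaguer and should not be leaned on.
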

\begin{rem}
Instead of ${\mathfrak E} (P_C(x)) \leq {\mathfrak E}(x)$ for
all $x \in X$, one could of course equivalently require 
$$P_C(x) \in D({\mathfrak E}) \quad \hbox{and} \quad {\mathfrak E} (P_C(x)) \leq {\mathfrak E}(x)\quad\hbox{for
all }x \in D({\mathfrak E}),$$
where $D({\mathfrak E}):=\{x \in X: \mathfrak{E}(x) < \infty \}$ is the effective domain of $\mathfrak{E}$.
\end{rem}
\begin{cor}
In case the effective domain $D({\mathfrak E})=:D$ is a Hilbert space in its own right and the restriction  $\mathfrak{E}_{D}:D \to \mathbb{R}$ is differentiable with derivative ${\mathfrak E}_{D}':D \to D'$, the condition
$$P_C(y) \in D \quad \hbox{and} \quad {\mathfrak E} (P_C(y)) \leq {\mathfrak E}(y)\quad\hbox{for
all }y \in D$$
is equivalent to 
$$P_C(y) \in D \quad \hbox{and} \quad {\mathfrak E}_{D}'(P_C(y))(y-P_C(y)) \geq 0 \quad \hbox{for
all }y \in D.$$ 
\end{cor}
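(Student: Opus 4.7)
The corollary asserts an equivalence between (i) the energy-decrease condition $\mathfrak{E}(P_C(y)) \le \mathfrak{E}(y)$ and (ii) the variational inequality $\mathfrak{E}_D'(P_C(y))(y-P_C(y))\ge 0$ for $y\in D$. My plan is to prove the two implications separately: one falls out of the first-order characterization of convexity, the other requires constructing a one-parameter family along which the projection is constant.

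For the implication (ii) $\Rightarrow$ (i), I would invoke the subgradient inequality for a convex, G\^ateaux-differentiable function on a Hilbert space: for every $a,b\in D$,
\[ \mathfrak{E}_D(a)-\mathfrak{E}_D(b)\ge \mathfrak{E}_D'(b)(a-b). \]
Setting $b=P_C(y)$ and $a=y$ (both in $D$ by the standing assumption $P_C(y)\in D$), the right-hand side is nonnegative by (ii), and since $\mathfrak{E}$ and $\mathfrak{E}_D$ coincide on $D$, this yields $\mathfrak{E}(P_C(y))\le \mathfrak{E}(y)$.

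For the converse (i) $\Rightarrow$ (ii), the key step is to exhibit a curve on which $P_C$ acts trivially. Setting $z_t := P_C(y) + t(y - P_C(y))$ for $t\ge 0$, I would check $P_C(z_t)=P_C(y)$ by multiplying the projection inequality $\langle y-P_C(y),\,c-P_C(y)\rangle_X\le 0$ (valid for every $c\in C$) by $t\ge 0$; this shows that $P_C(y)$ still satisfies the variational characterization of the projection for $z_t$. Since $D$ is a Hilbert space, hence in particular a linear subspace of $X$, we have $z_t\in D$ for every $t$, so (i) applied to $z_t$ reads $\mathfrak{E}_D(z_t)\ge \mathfrak{E}_D(P_C(y))$, with equality at $t=0$. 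Differentiating the scalar function $t\mapsto \mathfrak{E}_D(z_t)$ at $t=0^+$ by the chain rule and using this minimality gives precisely $\mathfrak{E}_D'(P_C(y))(y-P_C(y))\ge 0$.

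The essentially only nontrivial point is the identity $P_C(z_t)=P_C(y)$; the rest is the standard subgradient inequality on one side and a one-variable minimization argument on the other. A minor subtlety is that one needs $z_t\in D$ so that $\mathfrak{E}_D(z_t)$ is defined and differentiable in $t$, which is immediate as soon as $D$ is regarded as a linear subspace of $X$.
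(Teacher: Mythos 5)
Your proof is correct and takes essentially the same route as the paper's: both directions rest on the segment $z_\lambda=P_C(y)+\lambda\,(y-P_C(y))$, the convexity of $\mathfrak E$ (your subgradient inequality is just the monotone-difference-quotient bound the paper writes out), and the identity $P_C(z_\lambda)=P_C(y)$. The only difference is that you actually verify that projection identity via the variational characterization of $P_C$, which the paper merely asserts.
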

\begin{proof} 
In both directions the proof is an immediate consequence of
$$ 0 \leq {\mathfrak E}'(P_C(y))(y-P_C(y))= \lim_{\lambda \to 0} \frac{{\mathfrak E}(P_C(y)+\lambda(y-P_C(y)))-{\mathfrak E}(P_C(y))}{\lambda} \leq \frac{\lambda{\mathfrak E}(y)-\lambda{\mathfrak E}(P_C(y))}{\lambda}$$
and the fact that $P_C(P_C(y)+\lambda(y-P_C(y)))=P_C(y)$.
\end{proof}
Brezis, Barth\'elemy and Cipriani--Grillo formulate their results in the context of the theory of nonlinear semigroups. In fact, under our standing assumptions it is possible to extend the Lyapunov function $\mathcal E$ defined in~\eqref{lyap} by $+\infty$ to the whole $X_2$. Then the extension of $\mathcal E$ is
lower semicontinuous and convex and its subdifferential is single-valued
and agrees with the gradient $\nabla \mathcal E$ (we will
denote by $\mathcal E$ this extension, too). It is well-known that $\nabla \mathcal E$ generates a semigroup of Lipschitz-continuous mappings $(T(t))_{t\ge 0}$ on $X_2$. If $u_0\in H^1_Y$ and $\psi\equiv 0$, then $u(t)=(T(t))u_0)_{t\ge 0}$ is the solution to
\begin{equation}\tag{GS$_{X_2}$}
\left\{
\begin{array}{rcl}
\dot{u}(t)+\nabla {\mathcal E}(u(t))&=&0,\qquad t\ge 0,\\
 u(0)&=&u_0,   
\end{array}
\right.
\end{equation}
 yielded by Theorem~\ref{wellp}. Standard references for nonlinear semigroup theory include~\cite{Bre73} and~\cite{Miy92}. In particular, invariance of a closed convex set $C$ under the nonlinear semigroup can be rephrased by saying that 
$$\hbox{if }u_0\in C\subset X_2,\quad\hbox{then }u(t)\in C
\hbox{ for all }t\ge 0.$$
Then the following holds.

\begin{theo} \label{order}
Let $H$ be a Hilbert lattice. Under the assumptions of Theorem~
\ref{wellp}, take a closed order interval $\Lambda \subset H$ and consider
the order interval
$${\mathcal C}:=\{f\in X_2: f(x)\in \Lambda \hbox{ for a.e. }x\in
(0,1)\}\subset X_2.$$
Denote by $P_{\mathcal C}$ the orthogonal projection of $X_2$ onto $\mathcal C$ and by $P_{\Lambda}$ the orthogonal projection of $H \times H$ onto $\Lambda \times \Lambda$.
Consider the following assertions.
\begin{enumerate}[(a)]
\item For all initial data $u(0)\in {\mathcal C}\cap H^1_Y$ the
solution $u$ to $\rm(GS_{X_2})$ satisfies $u(t)\in {\mathcal C}$ for all
$t\ge 0$.
\item $P_{\mathcal C}f \in H^1_Y$ and ${\mathcal E}(P_{\mathcal C}f)\le {\mathcal E}(f)$ for all $f\in
H^1_Y$.
\item $P_{\mathcal C}f \in H^1_Y$ and ${\mathcal E}'(P_{\mathcal C}f)(f-P_{\mathcal C}f) \ge 0 $ for all $f\in
H^1_Y$.
\item For all initial data $\xi(0)\in (\Lambda\times \Lambda) \cap Y$ the
solution to
\begin{equation}
\tag{GS$_H$}
\left\{
\begin{array}{rcl}
\dot{\xi}(t)+\phi(\xi(t))&=&0, \qquad t\ge 0,\\
\xi(0)&=&\xi_0,
\end{array}
\right.
\end{equation} 
satisfies $\xi(t)\in \Lambda\times \Lambda$ for all $t\ge 0$.
\item $P_{\Lambda}\xi \in Y$ and $\Phi(P_{\Lambda}\xi)\le \Phi(\xi)$ for all $\xi\in Y$.
\item $P_\Lambda\xi \in Y$ and $\Phi'(P_\Lambda\xi)(P_\Lambda\xi)(\xi - P_\Lambda\xi)\ge 0$ for all $\xi\in Y$.
\end{enumerate}
Then $(a)\Leftrightarrow (b)\Leftrightarrow (c)\Leftarrow (d)\Leftrightarrow (e)\Leftrightarrow (f)$. Moreover, $(c)\Rightarrow (d)$ if ${\mathcal E}_0(P_{\mathcal C}f)= {\mathcal E}_0(f)$ or ${\mathcal E}_0'(P_{\mathcal C}f)(f-P_{\mathcal C}f) = 0 $ for all $f\in H^1_Y$.
\end{theo}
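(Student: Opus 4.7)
The plan is to apply Theorem~\ref{cipgri} together with its Corollary twice, once on the ``bulk'' level to the flow $\rm(GS_{X_2})$ and once on the ``boundary'' level to $\rm(GS_H)$, and then to connect the two chains through the pointwise nature of $P_\mathcal{C}$.

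For $(a)\Leftrightarrow(b)\Leftrightarrow(c)$ I would extend $\mathcal E$ from $H^1_Y$ to all of $X_2$ by $+\infty$. The extension is convex and lower semicontinuous, its subdifferential coincides with $\nabla\mathcal E$, and the resulting nonexpansive semigroup is exactly the flow of $\rm(GS_{X_2})$ delivered by Theorem~\ref{wellp}. Theorem~\ref{cipgri} with $X=X_2$ and $C=\mathcal C$ then gives $(a)\Leftrightarrow(b)$: the inequality $\mathcal E(P_\mathcal C f)\le\mathcal E(f)$ is void for $f\notin H^1_Y$ and, on $H^1_Y$, forces $P_\mathcal C f\in H^1_Y$ as a side effect. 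The Corollary with $D=H^1_Y$ then closes $(b)\Leftrightarrow(c)$. The same recipe yields $(d)\Leftrightarrow(e)\Leftrightarrow(f)$: extend $\Phi$ from $Y$ to $H\times H$ by $+\infty$, so that $\partial\tilde\Phi(\xi)=\phi(\xi)+Y^\perp$ on $Y$ and the associated semigroup on the effective domain $Y$ is precisely $\rm(GS_H)$; apply Theorem~\ref{cipgri} with $X=H\times H$, $C=\Lambda\times\Lambda$ (the clause $P_\Lambda\xi\in Y$ appearing automatically in order to avoid the $+\infty$ trap), and the Corollary with $D=Y$.

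The cross-implication $(d)\Rightarrow(c)$ exploits the pointwise identity $(P_\mathcal C f)(x)=\tilde P_\Lambda(f(x))$ for a.e.\ $x\in(0,1)$, where $\tilde P_\Lambda:H\to\Lambda$ denotes the orthogonal projection onto the order interval. Nonexpansiveness of $\tilde P_\Lambda$ yields $\|(P_\mathcal C f)'(x)\|_H\le\|f'(x)\|_H$ a.e., hence $\mathcal E_0(P_\mathcal C f)\le\mathcal E_0(f)$ for every $f\in H^1(0,1;H)$; at the boundary $(P_\mathcal C f)_{|\{0,1\}}=P_\Lambda(f_{|\{0,1\}})\in Y$ by~(e), so $P_\mathcal C f\in H^1_Y$, and~(e) simultaneously supplies $\mathcal E_1(P_\mathcal C f)\le\mathcal E_1(f)$. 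Adding the two inequalities gives~(b).

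For the optional $(c)\Rightarrow(d)$ under the extra assumption, I use the splitting
\[\mathcal E'(P_\mathcal C f)(f-P_\mathcal C f)=\mathcal E_0'(P_\mathcal C f)(f-P_\mathcal C f)+\Phi'\bigl(P_\Lambda(f_{|\{0,1\}})\bigr)\bigl(f_{|\{0,1\}}-P_\Lambda(f_{|\{0,1\}})\bigr).\]
The two alternative hypotheses (the energy equality $\mathcal E_0(P_\mathcal C f)=\mathcal E_0(f)$, or the vanishing of its derivative) are in fact equivalent, because the quadratic expansion $\mathcal E_0(f)-\mathcal E_0(P_\mathcal C f)=\mathcal E_0'(P_\mathcal C f)(f-P_\mathcal C f)+\mathcal E_0(f-P_\mathcal C f)$ has two nonnegative summands which must both vanish. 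In either case~(c) reduces to the boundary term being $\ge 0$; surjectivity of the trace $H^1_Y\to Y$ (already exploited in Section~\ref{sec2}) then promotes this to~(f) and hence to~(d). The main obstacle I expect is the compatibility between the two ambient spaces and projections: one must verify that $P_\mathcal C$ really acts pointwise as $\tilde P_\Lambda$, that this composition preserves $H^1(0,1;H)$, and, above all, that its boundary trace lands in $Y$ and not merely in $H\times H$ -- precisely where~(e) is indispensable and where the implication $(c)\Rightarrow(d)$ genuinely fails without a supplementary assumption.
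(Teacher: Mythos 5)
Your argument is correct and follows essentially the same route as the paper: Theorem~\ref{cipgri} together with its Corollary applied once on $X_2$ and once on $H\times H$ yields the two triples of equivalences, and the bridge $(d)\Rightarrow(a)$ comes from the pointwise action of $P_{\mathcal C}$, the facts that $P_{\mathcal C}$ preserves $H^1(0,1;H)$ with ${\mathcal E}_0(P_{\mathcal C}f)\le{\mathcal E}_0(f)$ (which the paper simply cites from the literature), and surjectivity of the trace. The only caveat concerns the ``moreover'' clause: the nonnegativity of ${\mathcal E}_0'(P_{\mathcal C}f)(f-P_{\mathcal C}f)$, which you invoke to declare the two alternative hypotheses equivalent, is asserted without justification (in this lattice setting the term in fact vanishes, but that needs an argument); under the hypothesis ${\mathcal E}_0(P_{\mathcal C}f)={\mathcal E}_0(f)$ it is safer to subtract this identity from $(b)$ to obtain ${\mathcal E}_1(P_{\mathcal C}f)\le{\mathcal E}_1(f)$ and hence $(e)$ directly via trace surjectivity.
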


\begin{proof}
The equivalence of $(a),(b)$ and $(c)$ follows from Theorem~\ref{cipgri}. Same for
the equivalence of $(d),(e)$ and $(f)$, once we consider $\rm(GS_{H})$ as a (sub)gradient system on the target space $H \times H$.
Now, observe that
$${\mathcal E}(P_{\mathcal C}f)={\mathcal E}_0(P_{\mathcal C}f)+{\mathcal E}_1(P_{\mathcal C}f),\qquad f\in H^1_Y.$$
We use the fact that, for orthogonal projections $P_{\mathcal C}$ onto
order intervals of $X_2$, $P_{\mathcal C}f \in H^1(0,1;H)$ if $f \in H^1(0,1;H)$ and ${\mathcal E}_0(P_{\mathcal C}u)\le {\mathcal E}_0(u)$ (cf. ~\cite{Mug10}).
Accordingly, a sufficient condition for $(b)$ is given by 
$$ P_\Lambda f_{|\{0,1\}} \in Y \quad \hbox{and} \quad \Phi(P_\Lambda f_{|\{0,1\}})\le \Phi(f_{|\{0,1\}})\qquad\hbox{for all }f\in
H^1_Y.$$
Due to surjectivity of the trace operator $H^1(0,1;H)\to H \times H$, this
condition is equivalent to $(e)$.
\end{proof}

\begin{exa}
We take as order interval on $H$ the positive cone $\Lambda=H^+$ and accordingly on $X_2$ the positive cone ${\mathcal C}=X_2^+$. Assuming  $P_{H^+}\xi= \xi^+ \in Y$ for all $\xi \in Y$, Theorem~\ref{order} asserts that the semigroup $(T(t))_{t \geq 0}$ associated with (GS$_{X_2}$) is \emph{positivity preserving}, i.e. $T(t)(u_0) \in X_2^+$ whenever $u_0 \in X_2^+$, if and only if $\Phi$ satisfies $\Phi(\xi^+) \leq \Phi(\xi)$  for all $\xi \in Y$. The only if part is due to $\mathcal{E}_0'(f^+)(f-f^+)=0$ for all $f \in H^1_Y$; this can be seen by identifying the Hilbert lattice     $H$ with a Lebesgue space $L^2(X)$ for some finite measure space $X$, see e.g.~\cite[Cor.~2.7.5]{Mey91}, via an isometric lattice isomorphism.
\end{exa}

In the nonlinear case, we provide a sufficient condition for well-posedness of a parabolic problems in all $L^p$-spaces by applying a Riesz--Thorin-like interpolation theorem. To this aim, it
is sufficient to show that for any two solutions $u,v$ of $\rm(AV)$
(with initial data $u_0,v_0$) the inequality
$$\|u(t)-v(t)\|_\infty \le \|u_0 - v_0\|_\infty,\qquad t\ge 0,$$
holds. This is equivalent to showing that the closed convex sets
$$\left\{(f,g)\in X_2\times X_2 : \|f-g\|_\infty\le \alpha\right\}$$
are invariant under the solution to the gradient system associated with
the functional
$${\mathscr E}(f,g)={\mathscr E}_0(f,g)+{\mathscr E}_1(f,g)={\mathcal E}(f)+{\mathcal E}(g), \qquad (f,g)\in
H^1_Y\times H^1_Y,$$
where
$${\mathscr E}_0(f,g)={\mathcal E}_0(f)+{\mathcal E}_0(g) \quad \hbox{and} \quad  {\mathscr E}_1(f,g)={\mathcal E}_1(f)+{\mathcal E}_1(g)=\Phi(f_{|\{0,1\}}) + \Phi(g_{|\{0,1\}}). $$
More generally, we are led to considering closed convex subsets \footnote{ Of course, such a set agrees with $C_\alpha$ setting
$\Lambda:=[-\alpha,\alpha]$. This makes sense, once we identify the
Hilbert lattice $H$ with a Lebesgue space $L^2(X)$ for some finite
measure space $X$, see e.g.~\cite[Cor.~2.7.5]{Mey91}, via an isometric
lattice isomorphism.}
$${\mathscr C}_\Lambda:=\left \{(f,g)\in X_2\times X_2 : (f-g)(x)\in \Lambda \hbox{ for a.e. }x\in
(0,1)\right\}$$
of $X_2\times X_2$, where $\Lambda$ is some closed order interval of $H
$. Similarly, we consider the set
$$C_\Lambda:=\{(\eta,\theta)\in (H\times H)\times (H\times H):
(\eta-\theta)\in \Lambda\times \Lambda\}.$$ 
We denote the orthogonal projection of $X_2\times X_2$ (resp.\ $H\times
H$) onto ${\mathscr C}_\Lambda$ (resp. $C_\Lambda$) by $P_{{\mathscr C}_
\Lambda}$ (resp. $P_{C_\Lambda}$).
Then the following holds.

\begin{theo}\label{differences}
Let $H$ be a Hilbert lattice. Under the assumptions of Theorem~
\ref{wellp}, take a closed order interval $\Lambda\subset H$. Consider
the following assertions.
\begin{enumerate}[(a)]
\item For any two initial data $(u(0),v(0))\in \big(H^1_Y \times H^1_Y
\big) \cap {\mathscr C}_\Lambda$ the corresponding solutions $u,v$ to
$\rm(GS_{X_2})$ satisfy $(u(t),v(t))\in  {\mathscr C}_\Lambda$ for all $t
\ge 0$.
\item $P_{{\mathscr C}_\Lambda}(f,g) \in H^1_Y \times H^1_Y$ and  ${\mathscr E}(P_{{\mathscr C}_\Lambda}(f,g))\le {\mathcal
E}(f)+{\mathcal E}(g)$ for all $f,g\in H^1_Y$.
\item $P_{{\mathscr C}_\Lambda}(f,g) \in H^1_Y \times H^1_Y$ and  ${\mathscr E}'(P_{{\mathscr C}_\Lambda}(f,g))((f,g) - P_{{\mathscr C}_\Lambda}(f,g)) \ge 0$ for all $f,g\in H^1_Y$.
\item For any two initial data $(\xi(0),\zeta(0)) \in \big(Y \times Y \big) \cap C_\Lambda $ 
 the corresponding solutions $\xi,\zeta$ to $\rm(GS_H)$ satisfy
$(\xi(t),\zeta(t))\in C_\Lambda $ for all $t\ge 0$.
\item $P_{C_{\Lambda}}(\xi,\zeta) \in Y \times Y$ and $\mathscr{E}_1(P_{C_{\Lambda}}(\xi,\zeta))\le \Phi(\xi) + \Phi(\zeta)$ for all $\xi,\zeta \in Y
$.
\item $P_{C_{\Lambda}}(\xi,\zeta) \in Y \times Y$ and $\mathscr{E}'_1(P_{C_{\Lambda}}(\xi,\zeta))((\xi,\zeta) - P_{C_{\Lambda}}(\xi,\zeta)) \ge 0$ for all $\xi,\zeta \in Y
$.
\end{enumerate}
Then $(a)\Leftrightarrow (b)\Leftrightarrow (c)\Leftarrow (d)\Leftrightarrow (e)\Leftrightarrow (f)$. Moreover, $(c)\Rightarrow (d)$ if 
${\mathscr E}_0(P_{{\mathscr C}_\Lambda}(f,g))= {\mathscr
E}_0(f,g)$ or ${\mathscr E}_0'(P_{{\mathscr C}_\Lambda}(f,g))((f,g) - P_{{\mathscr C}_\Lambda}(f,g)) = 0$ for all $f,g \in H^1_Y$.
\end{theo}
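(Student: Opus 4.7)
The proof runs parallel to that of Theorem~\ref{order}. The equivalences $(a)\Leftrightarrow(b)\Leftrightarrow(c)$ follow from Theorem~\ref{cipgri} and its Corollary applied to the functional $\mathscr{E}$ (extended by $+\infty$ outside $H^1_Y\times H^1_Y$) on the product Hilbert space $X_2\times X_2$, because the pair $(u,v)$ of solutions to $\rm(GS_{X_2})$ is precisely the gradient flow of $\mathscr{E}$ on that space. The equivalences $(d)\Leftrightarrow(e)\Leftrightarrow(f)$ follow analogously by the same machinery on $(H\times H)\times(H\times H)$ with the functional $(\eta,\theta)\mapsto\Phi(\eta)+\Phi(\theta)$.

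The central new ingredient is an explicit formula for $P_{\mathscr{C}_\Lambda}$. Since $\mathscr{C}_\Lambda$ constrains only the difference $f-g$, the change of variables $s=(f+g)/2$, $d=(f-g)/2$ decouples the minimisation via the parallelogram identity $\|f-f'\|_2^2+\|g-g'\|_2^2=2\|s-s'\|_2^2+2\|d-d'\|_2^2$, yielding
$$P_{\mathscr{C}_\Lambda}(f,g)=\left(\frac{f+g}{2}+\frac{1}{2}P_{\mathcal{C}_\Lambda}(f-g),\,\frac{f+g}{2}-\frac{1}{2}P_{\mathcal{C}_\Lambda}(f-g)\right),$$
where $\mathcal{C}_\Lambda:=\{h\in X_2:h(x)\in\Lambda\text{ a.e.}\}$; the pointwise-analogous formula holds for $P_{C_\Lambda}$. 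Consequently, taking boundary traces commutes with both projections: $(P_{\mathscr{C}_\Lambda}(f,g))_{|\{0,1\}}=P_{C_\Lambda}(f_{|\{0,1\}},g_{|\{0,1\}})$.

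To establish $(e)\Rightarrow(b)$, and hence $(d)\Rightarrow(c)$, I would invoke the result of~\cite{Mug10} that the orthogonal projection onto a pointwise order interval preserves $H^1(0,1;H)$ and satisfies $\mathcal{E}_0(P_{\mathcal{C}_\Lambda}h)\le\mathcal{E}_0(h)$. Applied to $h=f-g\in H^1_Y$, hypothesis $(e)$ forces the trace $P_{\Lambda\times\Lambda}((f-g)_{|\{0,1\}})$ to lie in $Y$, so both components of $P_{\mathscr{C}_\Lambda}(f,g)$ belong to $H^1_Y$. Expanding via the quadratic identity $\mathcal{E}_0(a)+\mathcal{E}_0(b)=\frac{1}{2}\mathcal{E}_0(a+b)+\frac{1}{2}\mathcal{E}_0(a-b)$ applied to the two components of $P_{\mathscr{C}_\Lambda}(f,g)$---whose sum is $f+g$ and whose difference is $P_{\mathcal{C}_\Lambda}(f-g)$---reduces $\mathscr{E}_0(P_{\mathscr{C}_\Lambda}(f,g))\le\mathscr{E}_0(f,g)$ to the scalar inequality $\mathcal{E}_0(P_{\mathcal{C}_\Lambda}(f-g))\le\mathcal{E}_0(f-g)$ just cited, while the $\mathscr{E}_1$-inequality is $(e)$ read through the trace identity.

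The partial converse $(c)\Rightarrow(d)$ under either side hypothesis exploits surjectivity of the trace operator $H^1(0,1;H)\to H\times H$: lifting an arbitrary $(\xi,\zeta)\in Y\times Y$ to $(f,g)\in H^1_Y\times H^1_Y$, the hypothesis that $\mathscr{E}_0$ (or its directional derivative along $(f,g)-P_{\mathscr{C}_\Lambda}(f,g)$) is unchanged by $P_{\mathscr{C}_\Lambda}$ forces $(b)$ (resp.~$(c)$) to reduce to the corresponding $\mathscr{E}_1$-inequality on traces, which is exactly $(e)$ (resp.~$(f)$). I expect the symmetrisation formula for $P_{\mathscr{C}_\Lambda}$ together with the verification that it carries traces to $P_{C_\Lambda}$ to be the only genuinely new technical step; beyond that, every argument is a mechanical adaptation of the corresponding step in the proof of Theorem~\ref{order}.
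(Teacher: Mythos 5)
Your argument is correct and follows exactly the route the paper intends: the paper in fact prints no separate proof of Theorem~\ref{differences}, treating it as the verbatim analogue of Theorem~\ref{order} (Theorem~\ref{cipgri} for the two triples of equivalences, the $H^1$-stability and $\mathcal{E}_0$-monotonicity of order-interval projections from~\cite{Mug10}, and surjectivity of the trace for the partial converse), and your write-up supplies precisely the details that this analogy requires, including the correct use of the parallelogram identity to reduce the $\mathscr{E}_0$-inequality to the single-function case. The one ingredient you derive that the paper instead outsources is the decoupled formula for $P_{\mathscr{C}_\Lambda}$, which it cites from~\cite[Lemma~3.3]{CipGri03}; your symmetrisation argument reproduces that formula correctly (it specialises to the quoted expression for $\Lambda=H^+$), so the proposal is sound.
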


\begin{exa}
The semigroup $(T(t))_{t \geq 0}$ associated with (GS$_{X_2}$) is called \emph{order preserving} if $T(t)(u_{0}) \leq T(t)(v_{0})$ for all $t \geq 0$ and all pairs $(u_{0},v_{0}) \in H_Y^1 \times H_Y^1$ with $u_{0} \leq v_{0}$.\\
Taking as closed, convex set 
$$\mathscr{C}_{H^+}=\{(f,g) \in X_2 \times X_2:  (f-g) \in X_2^+ \}$$ 
and since (cf.~\cite[Lemma 3.3]{CipGri03}) the projection onto $\mathscr{C}_{H^+}$ is given by 
$$ P_{\mathscr{C}_{H^+}}(f,g)=\left(f+\frac{(g-f)^+}{2},g-\frac{(g-f)^+}{2}\right) $$
we can apply Theorem~\ref{differences}. 
This yields for nonnegative $\mathcal{E}$, cf.~\cite[Theorem 3.8]{CipGri03}, the sufficient and necessary condition
\begin{equation}
\label{cipgrieq1}
 \mathcal{E}(f \wedge g) + \mathcal{E}(f \vee g) \leq \mathcal{E}(f) + \mathcal{E}(g)\quad\hbox{for all } f,g \in H_Y^1,
 \end{equation}
for the semigroup $(T(t))_{t \geq 0}$ associated with $\rm(GS_{X_2})$ to be order preserving. In our setting it is easy to see that actually
$$ \mathcal{E}_0(f \wedge g) + \mathcal{E}_0(f \vee g) = \mathcal{E}_0(f) + \mathcal{E}_0(g)$$
whence~\eqref{cipgrieq1} is equivalent to
$$ \Phi(x \wedge y) + \Phi(x \vee y) \leq \Phi(x) + \Phi(y) \quad \hbox{for all } x,y \in Y.$$
\end{exa}

\begin{exa}
In particular, applying the formula for the orthogonal projection onto
$C_\Lambda$ for $\Lambda:=[-\alpha,\alpha]_H$ obtained in~\cite[Lemma~3.3]{CipGri03}
we obtain the following:

Let $H$ be a Hilbert lattice. Assume $\Phi$ and therefore $\mathcal E$ to take values in $[0,\infty)$, hence $\nabla{\mathcal E}$ to be an accretive operator. If
\begin{eqnarray*}
\Phi\left(x+\frac{(x-y+\alpha)^+}{2} -
\frac{(x-y-\alpha)^-}{2}\right)-\Phi(x) \le \Phi(y)- \Phi\left(y-\frac{(x-y+\alpha)^+}{2} +
\frac{(x-y-\alpha)^-}{2}\right)
\end{eqnarray*}
for all $x,y\in Y$ and all $\alpha>0$, then the nonlinear
semigroup associated with $\mathcal E$ is contractive with respect to the norms of both $X_2$ and $L^\infty(0,1;H)$. Then by Browder's
nonlinear interpolation theorem, cf.~\cite[Theorem 3.6]{CipGri03}, one concludes that the nonlinear
semigroup extends to all spaces $L^p(0,1;H)$, $1<p<\infty$.
\end{exa}

\begin{exa}
Another application of the above invariance criterion shows that if
$\Phi$ is additive (resp.\ homogeneous), then so is the solution
operator to $\rm(AV)$: this follows by discussing invariance of the
closed convex sets
$$\left\{(f,g,h):X_2\times X_2\times X_2:f+g=h\right\}$$
(resp.
$$\left\{(f,g):X_2\times X_2:\alpha f=g\right\}\hbox{ for all }\alpha\in
\mathbb R\hbox{)}$$
under the solutions to the product gradient systems associated with
$\mathcal E$.
\end{exa}

\section{Dynamic boundary conditions}

We conclude this note by presenting the natural extension of our setting to the framework of diffusion equations with dynamic, nonlinear boundary conditions. 

Beside~\cite{Bel91,Bel93,AliNic93}, we also mention the more recent article~\cite{FavGolGol06}, where a partial differential inclusion featuring a linear heat equation and a nonlinear dynamic boundary condition is treated. To be more precise, we consider again a closed subspace $Y$ of $H\times H$ and discuss the Cauchy problem
\begin{equation*}\tag{AVD}
\left\{\begin{array}{rcll}
\frac{\partial}{\partial t}{u}(t,x)&=&\frac{\partial^2}{\partial x^2} u(t,x)+\psi_1(t,x),\qquad &t\in [0,T], \quad x\in (0,1),\\
u(t)_{|\{0,1\}}&\in& Y, &t\in [0,T],\\
\frac{\partial }{\partial t}u(t)_{|\{0,1\}}&=&-P_Y\left(\frac{\partial u(t)}{\partial\nu}+\phi\left(( u(t)_{|\{0,1\}}\right)\right)+\Psi_2(t), &t\in [0,T], \quad \Psi_2:[0,T] \to Y,\\
u(0)&=&u_0,\\
u(0)_{|\{0,1\}}&=&v_0.
\end{array}
\right.
\end{equation*}
Observe that if $Y={\rm Range}\,\tilde{ I}$ for a certain incidence matrix $I$ (see Example~\ref{quantumgr}), $\rm(AVD)$ is a nonlinear generalisation of the setting considered in~\cite{MugRom07}.

We consider basically the same functional $\mathcal E$ introduced in Section~\ref{sec2} but on a different Hilbert space. In fact, we introduce the space
$${\mathcal H}^1_Y:=\left\{{\mathfrak f}:=\begin{pmatrix}
f\\ g
\end{pmatrix}\in H^1_Y \times H\times H: f_{|\{0,1\}}=g\right\}.$$
and consider the functional $\mathcal  E :{\mathcal H}^1_Y \to \mathbb R$. The reference space is now the Hilbert product space ${\mathcal X}_2:=L^2(0,1;H)\times Y$.

In this framework the diffusion equation $\rm{(AVD)}$ corresponds to the gradient system
\begin{equation}\tag{GS$_{{\mathcal X}_2}$}
\left\{
\begin{array}{rcl}
\dot{{\mathfrak u}}(t)+\nabla {\mathcal E}({\mathfrak u}(t))&=&\Psi(t),\qquad t\in[0,T],\\
 {\mathfrak u}(0)&=&{\mathfrak u}_0,   
\end{array}
\right.
\end{equation}
with respect to the Hilbert spaces ${\mathcal H}^1_Y$ and ${\mathcal X}_2:=X_2\times Y$, where 
$${\mathfrak u}_0:=\begin{pmatrix} u_0 \\ v_0 \end{pmatrix} \quad \hbox{and} \quad \psi(t):=\begin{pmatrix}
\Psi_1(t)\\ \Psi_2(t)
\end{pmatrix},\quad \Psi_1(t):= \psi_1(t,\cdot),\quad t\in [0,T].$$

 To see this, we assume first $\mathfrak f :=\begin{pmatrix}
f\\ f_{|\{0,1\}} \end{pmatrix} \in  D(\nabla {\mathcal E})$ and choose $\mathfrak g:=\begin{pmatrix}
g\\ g_{|\{0,1\}} \end{pmatrix} \in \mathcal{H}^1_Y$ with  $g_{|\{0,1\}}=0 $. Thus, we obtain from
$$\mathcal E^\prime(\mathfrak f) \mathfrak g= \int_0^1 \langle f^\prime(x)\,,\,g^\prime(x)\rangle_H \, dx + \langle \phi(f_{|\{0,1\}})\,,\,0 \rangle_{H \times H} \stackrel{!}{=}\int_0^1{\langle \xi_0\,,\, g \rangle_{H} + \langle \xi_1\,,\, 0} \rangle_{H \times H}$$
that $f \in H^2(0,1;H)$ and $f^{\prime \prime}=-\xi_0$ and hence for arbitrary $\mathfrak g:=\begin{pmatrix} g\\ g_{|\{0,1\}} \end{pmatrix} \in \mathcal{H}^1_Y$, after having integrated by parts,
$$  \langle  \frac{\partial f}{\partial\nu}\,,\,g_{|\{0,1\}} \rangle_{H \times H} +\langle \phi(f_{|\{0,1\}})\,,\,g_{|\{0,1\}}\rangle_{H\times H} =  \langle \xi_1\,,\, g_{|\{0,1\}} \rangle_{H \times H}.$$
We infer $\xi_1=P_Y\left(\frac{\partial f}{\partial\nu}+\phi\left(( f_{|\{0,1\}}\right)\right)$.\\
For $f \in H^2(0,1;H) \cap H_Y^1$ one can define $ \xi_0$ and $\xi_1$ as above in order to see that $\mathfrak f :=\begin{pmatrix}
f\\ f_{|\{0,1\}} \end{pmatrix} \in D(\nabla {\mathcal E})$.  

\begin{theo}\label{wellp2}
Let $\phi:Y\to Y$ be a function that satisfies $\phi=\nabla \Phi$ for some convex $\Phi\in C^1(Y,{\mathbb R})$ and that maps bounded sets into bounded sets. 
Then for all initial data ${\mathfrak u} \in {\mathcal H}^1_Y$ there exists a unique solution $u\in H^1(0,T;X_2) \cap L^\infty(0,T;H^1_Y)$ to $\rm{(AVD)}$. If $\Psi \equiv 0 $, then the solution also belongs to $L^2(0,T;H^2(0,1;H))$ for every $T \geq 0$ and even to $L^2(0,\infty;H^2(0,1;H))$ if in addition $\Phi$ and therewith $\mathcal{E}$ is bounded from below. 
\end{theo}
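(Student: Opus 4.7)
The plan is to reduce Theorem~\ref{wellp2} to Theorem~\ref{gengrad}, applied this time to the pair $(V,X)=(\mathcal{H}^1_Y,\mathcal{X}_2)$ in place of $(H^1_Y,X_2)$. The identification of $D(\nabla\mathcal{E})$ in the new setting with those $\mathfrak{f}=(f,f_{|\{0,1\}})$ for which $f\in H^1_Y\cap H^2(0,1;H)$ and the correct dynamic boundary condition holds has already been sketched in the discussion just before the statement, so once Theorem~\ref{gengrad} produces a solution of $\rm(GS_{\mathcal{X}_2})$ in the product setting, its first component will automatically solve $\rm(AVD)$.

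First I would check that $\mathcal{H}^1_Y$ is a separable reflexive Hilbert space and that the embedding $\mathcal{H}^1_Y\hookrightarrow\mathcal{X}_2=X_2\times Y$ is continuous and dense. Closedness of $\mathcal{H}^1_Y$ inside $H^1_Y\times(H\times H)$ is a consequence of continuity of the trace, and continuity of the embedding into $\mathcal{X}_2$ is immediate from Lemma~\ref{imbed} together with~\eqref{esti}. For density, given $(F,g)\in X_2\times Y$, I would fix some $h\in H^1_Y$ with $h_{|\{0,1\}}=g$ (using surjectivity of the trace $H^1_Y\to Y$) and approximate $F-h$ in $X_2$ by a sequence $\varphi_n\in H^1_0(0,1;H)$; then $(h+\varphi_n,\,g)$ lies in $\mathcal{H}^1_Y$ and converges to $(F,g)$ in $\mathcal{X}_2$.

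Next I would verify that $\mathcal{E}:\mathcal{H}^1_Y\to\mathbb{R}$ is continuously differentiable with $\mathcal{E}'$ mapping bounded sets to bounded sets, by repeating the estimates from the proof of Theorem~\ref{wellp}: the functional still depends only on $f$ and on $f_{|\{0,1\}}$, and the trace bound~\eqref{esti} together with the hypothesis that $\phi=\nabla\Phi$ maps bounded sets to bounded sets is enough. The heart of the argument is $\mathcal{X}_2$-ellipticity of $\mathcal{E}$, which I would establish via Lemma~\ref{perturbd}. The quadratic form $\mathcal{E}_0(\mathfrak{f})=\tfrac12\|f'\|_2^2$ is already $\mathcal{X}_2$-elliptic: since the summand $\|f_{|\{0,1\}}\|_{H\times H}^2$ appears identically on both sides of the ellipticity estimate as part of the $\mathcal{X}_2$-norm, one only has to observe that $\tfrac12\|f'\|_2^2+\omega\|f\|_2^2\ge\alpha\|f\|_{H^1_Y}^2$ for suitable $\alpha>0$ and $\omega\ge 0$. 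For $\mathcal{E}_1(\mathfrak{f})=\Phi(f_{|\{0,1\}})$, convexity and continuity of $\Phi$ combined with the last assertion of Lemma~\ref{perturbd} give $\mathcal{E}_1(\mathfrak{f})\ge -k\|f_{|\{0,1\}}\|_{H\times H}-\beta\ge -k\|\mathfrak{f}\|_{\mathcal{X}_2}-\beta$, which falls in the degenerate case $X_\epsilon=\mathcal{X}_2$, $\epsilon=0$ of Lemma~\ref{perturbd}. No genuine interpolation is needed here because the trace lives directly inside the reference space---this is actually a simplification over the static setting of Theorem~\ref{wellp}.

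Theorem~\ref{gengrad} then yields existence and uniqueness of $\mathfrak{u}\in H^1(0,T;\mathcal{X}_2)\cap L^\infty(0,T;\mathcal{H}^1_Y)$; reading off the first component produces the claimed $u\in H^1(0,T;X_2)\cap L^\infty(0,T;H^1_Y)$, while the second component encodes the dynamic boundary condition. The additional $L^2$-$H^2$ regularity when $\Psi\equiv 0$ (and, on $[0,\infty)$, when $\Phi$ is bounded from below) follows from the energy inequality~\eqref{eneine} verbatim as in Theorem~\ref{wellp}, since $\dot{\mathfrak{u}}(t)\in L^2(0,T;\mathcal{X}_2)$ forces the $X_2$-component $u''(t)=\dot u(t)$ to belong to $L^2(0,T;X_2)$. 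The main difficulty I anticipate is not in any single estimate but in the bookkeeping of the product setup: verifying density of $\mathcal{H}^1_Y$ in $\mathcal{X}_2$ via surjectivity of the trace, and ensuring that the gradient equation on $\mathcal{X}_2$ correctly splits into the interior PDE and the dynamic boundary condition with the projection $P_Y$ in the right place, as flagged in the discussion preceding the statement.
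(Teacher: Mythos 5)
Your proposal is correct and follows essentially the same route as the paper's own (much terser) proof: apply Theorem~\ref{gengrad} to the pair $({\mathcal H}^1_Y,{\mathcal X}_2)$, verify the boundedness of $\mathcal E'$ and the $\mathcal X_2$-ellipticity via Lemma~\ref{perturbd}, and read off the first coordinate of $\mathfrak u$. Your additional details (density of ${\mathcal H}^1_Y$ in ${\mathcal X}_2$ via surjectivity of the trace, and the observation that the boundary term now sits directly inside the reference space so no interpolation is needed) are correct elaborations of steps the paper leaves implicit.
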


\begin{proof}
The proof is again based on the application of Theorem~\ref{gengrad} to the Hilbert spaces ${\mathcal H}^1_Y$ and ${\mathcal X}_2$ and to the functional $\mathcal  E :{\mathcal H}^1_Y \to \mathbb R$. 
One sees that $\mathcal E$  satisfies~\eqref{growthcond}. It follows from our assumptions that $\mathcal E'$ maps bounded sets of ${\mathcal H}^1_Y$ into bounded sets of $({\mathcal H}^1_Y)'$. Accordingly,  for all initial data ${\mathfrak u} \in {\mathcal H}^1_Y$ there exists a unique solution ${\mathfrak u}\in H^1(0,T;{\mathcal X}_2)\cap L^\infty(0,T;{\mathcal H}^1_Y)$ to $\rm{(AVD)}$. Taking the first coordinate of $\mathfrak u$ yields the claim.
\end{proof}

\bibliographystyle{alpha}
\bibliography{/home/delio/Dropbox/mate/referenzen/literatur}

\begin{thebibliography}{FGGR06}

\bibitem[Ama88]{Ama88}
H.~Amann.
\newblock Parabolic evolution equations and nonlinear boundary conditions.
\newblock {\em J. Differ. Equ.}, 72:201--269, 1988.

\bibitem[AN93]{AliNic93}
F.~{Ali Mehmeti} and S.~Nicaise.
\newblock Nonlinear interaction problems.
\newblock {\em Nonlinear Anal., Theory Methods Appl.}, 20:27--61, 1993.

\bibitem[AT01]{AdaTet01}
R.~Adami and A.~Teta.
\newblock A class of nonlinear {S}chr{\"o}dinger equations with concentrated
  nonlinearity.
\newblock {\em J. Funct. Anal.}, 180:148--175, 2001.

\bibitem[Bar96]{Bar96}
L.~Barth{\'e}lemy.
\newblock Invariance d'un convexe ferm{\'e} par un semi-groupe associ{\'e}
  {\`a} une forme non-lin{\'e}aire.
\newblock {\em Abstr. Appl. Analysis}, 1:237--262, 1996.

\bibitem[BD59a]{BeuDen58}
A.~Beurling and J.~Deny.
\newblock {Dirichlet spaces.}
\newblock {\em Proc. Natl. Acad. Sci. USA}, 45:208--215, 1959.

\bibitem[BD59b]{BeuDen59}
A.~Beurling and J.~Deny.
\newblock {Espaces de Dirichlet. I: Le cas \'el\'ementaire.}
\newblock {\em Acta Math.}, 99:203--224, 1959.

\bibitem[Bel91]{Bel91}
{J. von} Below.
\newblock A maximum principle for semilinear parabolic network equations.
\newblock In F.~Kappel~J.A. Goldstein and W.~Schappacher, editors, {\em
  Differential Equations with Applications in Biology, Physics, and Engineering
  (Proc. Leibnitz)}, volume 133 of {\em Lect. Notes Pure Appl. Math.}, pages
  37--45, New York, 1991. Marcel Dekker.

\bibitem[Bel93]{Bel93}
{J. von} Below.
\newblock Nonlinear and dynamical node transition in network diffusion
  problems.
\newblock In Ph. Cl{\'e}ment and G.~Lumer, editors, {\em Control Theory and
  Biomathematics (Proc. Han-Sur-Lesse 1991)}, volume 155 of {\em Lect. Notes
  Pure Appl. Math.}, pages 1--10, New York, 1993. Marcel Dekker.

\bibitem[Bre73]{Bre73}
H.~Brezis.
\newblock {\em {Operateurs Maximaux Monotones et Semi-Groupes de Contractions
  dans les Espaces de Hilbert}}.
\newblock North-Holland, Amsterdam, 1973.

\bibitem[CF10]{ChiFas10}
R.~Chill and E.~Fa\v{s}angov\'a.
\newblock {\em {Gradient Systems}}.
\newblock MatFyzPress, Prague, 2010.

\bibitem[CG03]{CipGri03}
F.~Cipriani and G.~Grillo.
\newblock {Nonlinear Markov semigroups, nonlinear Dirichlet forms and
  applications to minimal surfaces}.
\newblock {\em J. Reine Ang. Math.}, 2003(562):201--235, 2003.

\bibitem[CG10]{CocGar10}
G.M. Coclite and M.~Garavello.
\newblock Vanishing viscosity for traffic on networks.
\newblock {\em SIAM J. Math. Anal.}, 42:1761--1783, 2010.

\bibitem[FGGR06]{FavGolGol06}
A.~Favini, G.R. Goldstein, J.A. Goldstein, and S.~Romanelli.
\newblock The heat equation with nonlinear general {W}entzell boundary
  condition.
\newblock {\em Adv. Diff. Eq.}, 11:481--510, 2006.

\bibitem[Har00]{Har00}
M.~Harmer.
\newblock {Hermitian symplectic geometry and extension theory}.
\newblock {\em J. Phys. A}, 33:9193--9203, 2000.

\bibitem[KMS07]{KraMugSik07}
M.~{Kramar Fijav\v{z}}, D.~Mugnolo, and E.~Sikolya.
\newblock Variational and semigroup methods for waves and diffusion in
  networks.
\newblock {\em Appl. Math. Optim.}, 55:219--240, 2007.

\bibitem[KS97]{KotSmi97}
T.~Kottos and U.~Smilansky.
\newblock Quantum chaos on graphs.
\newblock {\em Phys. Rev. Lett.}, 79:4794--4797, 1997.

\bibitem[KS99a]{KosSch99}
V.~Kostrykin and R.~Schrader.
\newblock Kirchhoff's rule for quantum wires.
\newblock {\em J. Phys. A}, 32:595--630, 1999.

\bibitem[KS99b]{KotSmi99}
T.~Kottos and U.~Smilansky.
\newblock Periodic orbit theory and spectral statistics for quantum graphs.
\newblock {\em Ann. Physics}, 274:76--124, 1999.

\bibitem[Kuc04]{Kuc04}
P.~Kuchment.
\newblock Quantum graphs {I}: {S}ome basic structures.
\newblock {\em Waves Random Media}, 14:107--128, 2004.

\bibitem[Kuc08]{Kuc08}
P.~Kuchment.
\newblock Quantum graphs: an introduction and a brief survey.
\newblock In P.~Exner, J.~Keating, P.~Kuchment, T.~Sunada, and A.~Teplyaev,
  editors, {\em {Analysis on Graphs and its Applications}}, volume~77 of {\em
  Proc. Symp. Pure Math.}, pages 291--314, Providence, RI, 2008. Amer. Math.
  Soc.

\bibitem[Lio69]{Lio69}
J.L. Lions.
\newblock {\em {Quelques M{\'e}thodes de R{\'e}solution des Problemes aux
  Limites non Lin{\'e}aires}}.
\newblock Dunod, Paris, 1969.

\bibitem[Lum80]{Lum80}
G.~Lumer.
\newblock Connecting of local operators and evolution equations on networks.
\newblock In F.~Hirsch, editor, {\em Potential Theory (Proc. Copenhagen 1979)},
  pages 230--243, Berlin, 1980. Springer-Verlag.

\bibitem[Miy92]{Miy92}
I.~Miyadera.
\newblock {\em {Nonlinear semigroups}}, volume 109 of {\em Transl. Math.
  Monographs}.
\newblock Amer. Math. Soc., Providence, RI, 1992.

\bibitem[MN91]{Mey91}
P.~Meyer-Nieberg.
\newblock {\em {B}anach Lattices}.
\newblock Universitext. Springer-Verlag, Berlin, 1991.

\bibitem[MR07]{MugRom07}
D.~Mugnolo and S.~Romanelli.
\newblock Dynamic and generalized {W}entzell node conditions for network
  equations.
\newblock {\em Math. Meth. Appl. Sci.}, 30:681--706, 2007.

\bibitem[Mug07]{Mug07}
D.~Mugnolo.
\newblock Gaussian estimates for a heat equation on a network.
\newblock {\em Networks Het. Media}, 2:55--79, 2007.

\bibitem[Mug08]{Mug08}
D.~Mugnolo.
\newblock A variational approach to strongly damped wave equations.
\newblock In H.~Amann {et al.}, editor, {\em Functional Analysis and Evolution
  Equations -- The G{\"u}nter Lumer Volume}, pages 503--514. Birkh{\"a}user,
  Basel, 2008.

\bibitem[Mug10]{Mug10}
D.~Mugnolo.
\newblock Vector-valued heat equations and networks with coupled dynamic
  boundary conditions.
\newblock {\em Adv. Differential Equations}, 15:1125--1160, 2010.

\bibitem[Pan05]{Pan05}
K.~Pankrashkin.
\newblock Reducible boundary conditions in coupled channels.
\newblock {\em J. Phys. A}, 38:8979--8992, 2005.

\bibitem[PB04]{PokBor04}
Y.V. Pokornyi and A.V. Borovskikh.
\newblock Differential equations on networks (geometric graphs).
\newblock {\em J. Math. Sci.}, 119:691--718, 2004.

\bibitem[RS53]{RueSch53}
K.~Ruedenberg and C.~W. Scherr.
\newblock {Free-Electron Network Model for Conjugated Systems. I. Theory}.
\newblock {\em J. Chem. Phys.}, 21:1565--1581, 1953.

\bibitem[SS65]{SchSch65}
F.W. Sch\"afke and A.~Schneider.
\newblock {S-hermitesche Rand-Eigenwertprobleme. I}.
\newblock {\em Math. Ann.}, 162:9--26, 1965.

\bibitem[SS66]{SchSch66}
F.W. Sch\"afke and A.~Schneider.
\newblock {S-hermitesche Rand-Eigenwertprobleme. II}.
\newblock {\em Math. Ann.}, 165:236--260, 1966.

\bibitem[ZZ03]{ZhaZhe03}
L.~Zhao and S.~Zheng.
\newblock {Blow-up estimates for system of heat equations via nonlinear
  boundary flux}.
\newblock {\em Nonlinear Analysis}, 54:251--259, 2003.

\end{thebibliography}
\end{document}